\definecolor{darkgreen}{rgb}{0,0.5,0}
\definecolor{dbrown}{RGB}{101,67,33}
\numberwithin{equation}{section}
\newtheorem{thm}[equation]{\sc Theorem}
\newtheorem{lem}[equation]{\sc Lemma}
\newtheorem{prop}[equation]{\sc Proposition}
\newtheoremstyle{notation}{3pt}{3pt}{}{}{\itshape}{:}{.5em}{\thmname{#1}}
\theoremstyle{notation}
\newtheorem{rem}{\it Remark}
\newtheorem{defin}{\it Definition}
\newtheorem{ex}{\it Example}
\def\mod{\mbox{{\rm mod}}}
\def\Hom{\mbox{\rm Hom}}
\newcounter{boxsize}
\newcounter{tempcounter}
\newcommand\smbox{\put(0,0){\line(1,0){\value{boxsize}}}%
  \put(\value{boxsize},0){\line(0,1){\value{boxsize}}}%
  \put(0,0){\line(0,1){\value{boxsize}}}%
  \put(0,\value{boxsize}){\line(1,0){\value{boxsize}}}}
\newcommand\singlebox[1]{\raisebox{-.4ex}{\begin{picture}(4,0)\setcounter{boxsize}{3}%
    \put(0,0)\smbox%
    \put(0,0){\makebox(\value{boxsize},\value{boxsize})[c]{%
      $\scriptstyle\sf#1$}}\end{picture}}}
\newcommand\boxes[2]{\ifthenelse{#2=3}{$\scriptstyle P_2^{#1}$}{%
                                       $\scriptstyle P_{#2}^{#1}$}}
\newcommand{\al}{\alpha}
\newcommand{\be}{\beta}
\newcommand{\ga}{\gamma}
\newcommand\homleq{\leq_{\rm hom}}
\newcommand\extleq{\leq_{\rm ext}}
\newcommand\boxleq{\leq_{\rm box}}
\newcommand\boxl{<_{\rm box}}
\newcommand\domleq{\leq_{\rm dom}}
\newcommand\natleq{\leq_{\rm nat}}
\begin{document}
\thispagestyle{empty}
\color{black}
\phantom m\vspace{-2cm}

\bigskip\bigskip
\begin{center}
{\large\bf Two partial orders on Standard Young Tableaux with applications to invariant subspaces of nilpotent linear operators} 
\end{center}

\smallskip

\begin{center}
 Mariusz Kaniecki and Justyna Kosakowska\footnote{Declarations of interest: none}

\vspace{1cm}

\bigskip \parbox{10cm}{\footnotesize{\bf Abstract:}
 In the paper we investigate two partial orders on standard Young tableaux and show their applications in  the theory of invariant subspaces of nilpotent linear operators.}

\medskip \parbox{10cm}{\footnotesize{\bf MSC 2010:}
  05E10, 
  20K27, 
  47A15  

}

\medskip \parbox{10cm}{\footnotesize{\bf Key words:} 
subgroup embedding; invariant subspace; Standard Young tableau; Littlewood-Richardson tableau; }
\end{center}

\section{Introduction}

The main aim of this paper is to  generalize results of \cite{ks-JA,ks-MZ,kst2019}, where applications of two partial orders on
standard Young tableaux (and Littlewood-Richardson tableaux)
in the theory of invariant subspaces are presented.\medskip

Let $k$ be a~field and let $\Lambda=k[p]$ be  the algebra of polynomials with coefficients in $k$ and one variable $p$. Nilpotent finite dimensional $\Lambda$-modules we call {\it nilpotent linear operators}. Let $\mathcal{S}=\mathcal{S}(\Lambda)$ be the category of embeddings $(A\subset B)$, where $A,B$ are nilpotent nilpotent linear operators. Objects of $\mathcal{S}$ we call {\it invariant subspaces of nilpotent linear operators} (shortly {\it invariant subspaces} or {\it embeddings}), see Section \ref{sec-Lambda-modules} for precise definitions.\medskip

Let $\alpha$ be a~partition of a~given weight $|\al|=r$ and let $\mathcal{T}_\alpha$ be the set of all standard Young tableaux of shape $\alpha$, see Section \ref{orders_SYT} for definitions. In \cite{kst2019} two partial orders $\boxleq$ and $\domleq$ on $\mathcal{T}_\alpha$ are investigated. There are also shown connections of these orders with partial orders defined for some classes of invariant subspaces of nilpotent linear operators, see \cite{kst2019,ks-MZ}. \medskip

In the present paper we extend definitions of the  orders $\boxleq$ and $\domleq$ to the set 
$$ 
\mathcal{T}_r=\bigcup_{\alpha;|\al|=r}\mathcal{T}_\al.
$$
Moreover in Section \ref{seq-alg-orders} we define partial orders $\extleq$ and $\homleq$ for suitably chosen subcategory of $\mathcal{S}(\Lambda)$. Applying Littlewood-Richardson tableaux and their connections with objects of $\mathcal{S}(\Lambda)$ (see Sections \ref{seq-SYT-LR}, \ref{sec-poles}), we connect orders  $\extleq$ and $\homleq$ with properties of $\mathcal{T}_r$. One of the main aims of the paper is to prove  the following theorem.

\begin{thm}\label{thm-main}
For any integer $r\geq 1$, we have $$\boxleq=\domleq$$ on $\mathcal{T}_r$. 
\end{thm}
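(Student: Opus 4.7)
The plan is to prove the two inclusions $\boxleq\subseteq\domleq$ and $\domleq\subseteq\boxleq$ on $\mathcal{T}_r$ separately, using the already-known equality $\boxleq=\domleq$ on each fixed shape $\mathcal{T}_\al$ from \cite{kst2019} as the base of the argument. The genuinely new content, compared with that earlier fixed-shape theorem, is that the tableaux $S$ and $T$ being compared in $\mathcal{T}_r$ may have different shapes, so the goal is to isolate the shape-changing behaviour of the two orders and reduce it, inductively, to the known fixed-shape statement.

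For $\boxleq\subseteq\domleq$ I would argue directly on generators. Since $\boxleq$ on $\mathcal{T}_r$ is generated by elementary box moves, it suffices to verify that each such move is a $\domleq$-relation. Shape-preserving generators are immediately covered by \cite{kst2019}; for a shape-changing generator the effect on the partial sub-diagrams formed by the entries $\leq k$ should be a small, explicit change, and one checks the defining dominance inequalities directly by a short case analysis on which cell moves where.

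For the harder inclusion $\domleq\subseteq\boxleq$, I would proceed by a double induction, first on $r$ and, for fixed $r$, on a measure of how far apart the shapes of $S$ and $T$ are (for example the cardinality of the symmetric difference of the two Young diagrams). Given $S\domleq T$ with distinct shapes, one first observes that the two shapes must be comparable in the usual dominance order on partitions. Then I would locate the smallest index $k$ where the partial shapes of $S$ and $T$ differ strictly and use this $k$ to produce a single elementary box move on $T$ yielding an intermediate tableau $T'$ satisfying $S\domleq T'\boxleq T$, with the shape of $T'$ either coinciding with $\mathrm{sh}(S)$ or strictly closer to it than $\mathrm{sh}(T)$. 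In the first case the fixed-shape theorem of \cite{kst2019} closes the argument; in the second, the induction hypothesis does.

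The main obstacle I expect is precisely the construction of $T'$ in that inductive step. One must simultaneously guarantee that the selected move produces a genuine standard Young tableau, that it is an admissible generator of $\boxleq$ on $\mathcal{T}_r$, and that the resulting tableau still dominates $S$. Establishing this requires a careful bookkeeping of the sequence of partial shapes across the move, and the delicate point is to show that the choice of the minimal discrepancy index $k$ always supplies a legal move. Once such a structural lemma is in place, the proof reduces, step by step, to the fixed-shape equality from \cite{kst2019} and to the easy direction proved above.
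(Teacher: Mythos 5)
Your easy direction is fine in spirit: checking directly that each generator of $\boxleq$ (both the shape-preserving swap and the winding-up of a last box to a higher-numbered row) lowers every partial shape in the natural order is a short, valid computation. Note, though, that the paper does not do this combinatorially; it derives $\boxleq\subseteq\domleq$ from the algebraic chain $\boxleq\subseteq\extleq\subseteq\homleq\subseteq\domleq$ on rook-strip LR-tableaux (Lemmata \ref{box-implies-ext} and \ref{lem-ext-to-hom}, Proposition \ref{prop-hom2dom}), transported to $\mathcal{T}_r$ through the bijection $\Phi^\beta_\gamma$ of Lemma \ref{lem-LR-SYT-box-dom}. Your more elementary route for this inclusion would be acceptable.

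The genuine gap is in the direction $\domleq\subseteq\boxleq$. Your whole argument rests on an unproved structural lemma: given $S\domleq T$ of different shapes, there is a single \emph{legal} box move $T\mapsto T'$ with $S\domleq T'$ and with $\mathrm{sh}(T')$ strictly closer to $\mathrm{sh}(S)$. You flag this yourself as ``the main obstacle,'' but it cannot be deferred --- it is the entire content of the theorem in the shape-changing case, and it is delicate precisely because the shape-changing generator is very restrictive (only the last box of a row may be wound up, only to the last position of a higher-numbered row, and the result must remain a standard tableau). Choosing the smallest index $k$ at which the partial shapes differ does not obviously single out a movable box, and you give no argument that an admissible move exists which still dominates $S$. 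The paper avoids this inductive construction entirely: it defines a stabilization map $f:\mathcal{T}_r\to\mathcal{T}_{\underline{r}}$, with $\underline{r}=(r,\dots,r)$, which pads every tableau to a common rectangular shape by a canonical completion, proves that $\Pi\domleq\Sigma$ iff $f(\Pi)\domleq f(\Sigma)$ and that $f(\Pi)\boxleq f(\Sigma)$ implies $\Pi\boxleq\Sigma$ (Lemma \ref{lem-dom-box}), and then invokes the fixed-shape theorem of \cite{kst2019} on $\mathcal{T}_{\underline{r}}$. As it stands your argument for $\domleq\subseteq\boxleq$ is a plan rather than a proof; either supply a complete proof of your intermediate-tableau lemma, or adopt the padding reduction, which dissolves the shape-changing difficulty in one step.
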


\subsection{Motivations and related results}

Partial orders $\boxleq=\domleq$ we consider on $\mathcal{T}_r$ contains all orders presented in \cite{Tas2006}.

The problem of classifying objects of $\mathcal{S}(\Lambda)$, up to isomorphism,
has its origin in \cite{birkhoff}, where  Birkhoff proposes to
classify all embeddings of a~subgroup in an
abelian group, up to automorphisms of the ambient group.
There are many versions of the problem:  one can take for the coefficient ring
any local uniserial ring or one can admit several submodules. It is known that in general this problem is "wild",
i.e. there is no chance to get nice description of all objects
up to isomorphism, see \cite{arnold,SimJA2015}. However, there are many results developing
this theory, for example \cite{HRW,BHW,RSch, SimJA2015, SimJPAA2018, DM2019, DMS2019, KLMAdv, KLMJReine}. In particular, the papers \cite{SimJPAA2018,KLMAdv, KLMJReine} discuss important interrelations between the invariant subspaces and the categories ${\rm coh}\mathbb{X}$ of coherent sheaves over weighted projective lines ${\rm coh}\mathbb{X}$ and singularity theory.
 Recently, in \cite{DMS2019}, the authors
 applied  derived categories of coherent
 sheaves $\mathcal{D}^b({\rm coh}\mathbb{X})$ to obtain a description of some objects of the category of invariant
 subspaces of nilpotent linear operators.
Nilpotent operators and their invariant subspaces  appear in a~natural way in representation theory
 \cite{KLMJReine,KLMAdv,maeda,SimJA2015,SimJPAA2018}. They are also special cases of monic representations \cite{lz,xzz,xzz1}.

\subsection{Organization of the paper}

In Section \ref{orders_SYT} we recall notation and definitions connected with standard Young tableaux, recall definition of $\boxleq$, $\domleq$ on $\mathcal{T}_\alpha$ and extend them to the partial orders on $\mathcal{T}_r$. Moreover we prove that $\domleq\subseteq\boxleq$.

In Section \ref{seq-SYT-LR} we recall definition of Littlewood-Richardson tableaux (shortly LR-tableaux) of shape $\beta\setminus\gamma$ and content $\alpha$. In the case $\beta\setminus\gamma$ is a~rook strip we define $\boxleq$, $\domleq$ on the set $\mathcal{T}_{r,\gamma}^\beta$ of LR-tableaux of shape $\beta\setminus\gamma$ and content $\alpha$ with $|\alpha|=r$. We prove that $\boxleq$, $\domleq$ are invariant under the bijection  \ref{eq-phi-gamma-beta}
$$ \Phi^\beta_\gamma :\mathcal{T}_{r,\gamma}^\beta\to\mathcal{T}_r.$$

In Section \ref{sec-Lambda-modules} we give notation and definitions connected with the category $\mathcal{S}(\Lambda)$, show connections of objects of $\mathcal{S}(\Lambda)$ with LR-tableaux and define two types of objects $\mathcal{S}(\Lambda)$. 

In Section \ref{seq-alg-orders} we define algebraic partial orders $\extleq$ and $\homleq$ and prove that $\extleq\subseteq\homleq\subseteq\domleq$.

Finally, in Section \ref{sec-ses-cyclic} we prove that $\boxleq\subseteq\extleq$ by constructing to any box-move a~suitable short exact sequence of objects of $\mathcal{S}(\Lambda)$. This finishes proof of Theorem \ref{thm-main}.

\section{Two partial orders on SYTs}\label{orders_SYT}

In this section we generalize the main result of \cite{kst2019}.

\subsection{Partitions and standard Young tableaux}

A \emph{partition} $\al=(\al_1,\ldots,\al_s)$ is a~finite, weakly decreasing sequence of natural numbers. The number $s$ is called the \emph{length} of the partition $\al$, and sum $|\al|=\al_1+\ldots+\al_s$ is called its \emph{weight}. Given natural number $r$, we denote by $\mathcal{P}_r$ the set of all partitions of weight $r$ and by $\mathcal{P}$ the set of all partitions. 

A~partition $\al$ we visually represent by its \emph{Young diagram}, consisting of top-aligned $s$ columns composed of $\al_1,\ldots,\al_s$ cells, respectively. Since there is an obvious bijection between the set of all Young diagrams and the set of all partitions, the Young diagram connected with a~partition $\al$ is also denoted by $\al$. 
By $\overline{\al}=(\overline{\al}_1,\ldots,\overline{\al}_{\al_1})$ we denote the~partition \emph{transposed} to a~partition $\al$, i.e. the Young diagram of $\overline{\al}$ is the~transposition of the Young diagram $\al$: 
	\[
	\overline{\al}_j=\#\{i\,:\;\al_i\geq j\}.
	\]

The picture below shows an example of two Young diagrams associated with partitions $\al=(3,2,2,1)$ and $\overline{\al}=(4,3,1)$, respectively.

	\[
	\raisebox{.5cm}{$\al\,:$} \quad
		\begin{picture}(12,12)
		\multiput(0,3)(0,3)3{\smbox}
		\multiput(3,6)(0,3)2{\smbox}
		\multiput(6,6)(0,3)2{\smbox}
 		\put(9,9){\smbox}
		\end{picture}\qquad
	\raisebox{.5cm}{$\overline{\al}\,:$} \quad
		\begin{picture}(12,12)
		\multiput(0,0)(0,3)4{\smbox}
		\multiput(3,3)(0,3)3{\smbox}
 		\put(6,9){\smbox}
		\end{picture}
	\] 

\begin{rem} In all diagrams we number rows starting from top and going down.
\end{rem}

We say that partitions $\alpha,\beta\in\mathcal{P}_r$ of the same weight $r$ are in \emph{the natural order} $\alpha \natleq \beta$
if for any $c$ there is
$$\sum\limits_{i=0}^c \alpha_i \geq \sum\limits_{i=0}^c \beta_i$$  or equivalently
$$\sum\limits_{i=0}^c \overline{\alpha}_i \leq \sum\limits_{i=0}^c \overline{\beta}_i,$$ see 
\cite[1.9]{macd}. It is straightforward to check  that $\natleq$ is a~partial order on $\mathcal{P}_r$.

We define a~new relation on $\mathcal{P}$ (also denoted by $\natleq$).  Assume that $\alpha,\beta\in \mathcal{P}$ are arbitrary partitions (not necessarily of the same weight!). We write $\alpha \natleq \beta$
if for any $c$ there is
$\sum\limits_{i=0}^c \overline{\alpha}_i \leq \sum\limits_{i=0}^c \overline{\beta}_i$. Note that if $|\alpha|\neq |\beta|$ it  might be not equivalent to $\sum\limits_{i=0}^c \alpha_i \geq \sum\limits_{i=0}^c \beta_i$  for any $c$ (for example $\alpha=(1),\beta=(2)$). The both orders are coincides on $\mathcal{P}_\alpha$ for any partition $\alpha$.

It is easy to see that $\natleq$ is  reflexive and transitive on $\mathcal{P}$. To see that it is a~partial order, it is enough to observe that  for partition $\alpha,\beta$ (not necessarily of the same length) with $\alpha\natleq \beta$ and $\beta\natleq \alpha$ we have $|\alpha|=|\beta|$. In this case $\alpha,\beta\in \mathcal{P}_{|\alpha|}$ and $\alpha=\beta$, because $\natleq$ is a~partial order on $\mathcal{P}_{|\alpha|}$. It follows that $\natleq$ is a~partial order on $\mathcal{P}$ and we call it \emph{the natural order}.

Assume that $\ga\subseteq\be$. The \emph{skew diagram} $\be\setminus\ga$ is a~diagram obtained by visual extraction of the Young diagram $\ga$ from the Young diagram $\be$ when upper left corners of both are in the same place. The skew diagram $\be\setminus\ga$ is said to be a \emph{horizontal strip} if $\be_i\leq\ga_i+1$ holds for each $i=1,\ldots,s$ and a \emph{vertical strip} if $\overline{\be}\setminus\overline{\ga}$ is a~horizontal strip. A~\emph{rook strip} is both vertical and horizontal strip.

The picture below shows an example of the horizontal strip $\be\setminus\ga$ and the vertical strip $\overline{\be}\setminus\overline{\ga}$, where $\be=(3,2,2,1)$ and $\ga=(2,1,1)$.

	\[
	\raisebox{.5cm}{$\be\setminus\ga\,:$} \quad
		\begin{picture}(12,12)
		\put(0,3){\smbox}
		\put(3,6){\smbox}
		\put(6,6){\smbox}
 		\put(9,9){\smbox}
		\end{picture}\qquad
	\raisebox{.5cm}{$\overline{\be}\setminus\overline{\ga}\,:$} \quad
		\begin{picture}(12,12)
		\put(0,0){\smbox}
		\multiput(3,3)(0,3)2{\smbox}
 		\put(6,9){\smbox}
		\end{picture}
	\]

\begin{defin}
Let $\al$ be a partition.   A~\emph{standard Young tableau} of shape $\al$  (SYT($\al$) for brevity) is the Young diagram $\al$ filled with natural numbers $1,\ldots,|\al|$ that strictly increase along rows and down columns. Denote by $\mathcal{T}_\alpha$
the set of all SYT($\al$).
\end{defin}

\begin{rem}
A~SYT($\al$) $\Sigma$ can be identified with the 
 sequence
$\Sigma=[\sigma^{(1)},\ldots,\sigma^{(|\al|)}]$ of partitions such that
$\sigma^{(1)}\subseteq \ldots\subseteq \sigma^{(|\al|)}$, where 
 the skew diagram
$\sigma^{(e)}\setminus\sigma^{(e-1)}$ carries the entry $\singlebox e$ for all $e=1,\ldots,|\al|$. We set $\sigma^{(0)}=(0)$.
\end{rem}

\begin{ex}
Let $\alpha=(3,2,2)$ be a partition and $\Sigma$ be SYT$(\alpha)$ of the form
$$\Sigma = \ytableausetup{centertableaux}\ytableausetup{smalltableaux}
\ytableaushort
{124,367,5}*{3,3,1}$$
The tableau $\Sigma$ can be  be identified with the 
 sequence
$$\Sigma=[\sigma^{(1)},\ldots,\sigma^{(7)}],$$
where $\sigma^{(1)}=(1),\;\sigma^{(2)}=(1,1),\;\sigma^{(3)}=(2,1),\;\sigma^{(4)}=(2,1,1),\;\sigma^{(5)}=(3,1,1),\;\sigma^{(6)}=(3,2,1),\;\sigma^{(7)}=(3,2,2).$
\end{ex}

Given $r\in \mathbb{N}$ we use the notation
$$ 
\mathcal{T}_r=\bigcup_{\alpha;|\al|=r}\mathcal{T}_\al.
$$

\subsection{Two partial orders on $\mathcal{T}_r$}

\begin{defin}
 \begin{enumerate}
  \item Let $\Sigma=[\sigma^{(1)},\ldots,\sigma^{(r)}],
  =[\pi^{(1)},\ldots,\pi^{(r)}]\in \mathcal{T}_r$. We say that $\Pi\leq _{\rm dom} \Sigma$ in \emph{the dominance order} if 
  $\pi^{(i)}\leq_{\rm nat}\sigma^{(i)}$ for all $i=1,\ldots,r$. {\bf Note} that we do not require that $\Sigma$ and $\Pi$ have the same shape.
  \item Suppose $\Sigma,\Pi\in \mathcal{T}_r$.
We say $\Pi$ is obtained from $\Sigma$ by {\it a~decreasing box move} if
$\Pi$ is obtained by swapping two entries in $\Sigma$ so that the smaller entry
  winds up in  the higher-numbered row (in this case the SYTs $\Sigma,\Pi$ have the same shape); or if
$\Pi$ is obtained by winding up the last box of a~row of $\Sigma$ to the last position of a~higher-numbered row (in this case the SYTs $\Sigma,\Pi$ have different shapes).  
We denote by $\leq_{\sf box}$ the partial order generated by all box moves.
 \end{enumerate}
\end{defin}

\begin{rem}
 In \cite{kst2019} the partial orders $\leq_{\rm dom}$ and $\leq_{\rm box}$ were defined and investigated on the set
 $\mathcal{T}_\al$ for a~given partition $\al$.
\end{rem}

\begin{ex}
Here is an example, let 
\[\Pi= \ytableausetup{centertableaux}\ytableausetup{smalltableaux}
\ytableaushort
{13,25,4}*{2,2,1}\;\;{\rm and}\;\;\Sigma= \ytableausetup{centertableaux}\ytableausetup{smalltableaux}
\ytableaushort
{123,45}*{3,2}\]
The tableaux $\Sigma,\Pi$ can be  be identified with the 
 sequences
\[\Pi=[\pi^{(1)},\ldots,\pi^{(5)}],\Sigma=[\sigma^{(1)},\ldots,\sigma^{(5)}],\]
where $\pi^{(1)}=(1),\;\pi^{(2)}=(2),\;\pi^{(3)}=(2,1),\;\pi^{(4)}=(3,1),\;\pi^{(5)}=(3,2)$ and
$\sigma^{(1)}=(1),\;\sigma^{(2)}=(1,1),\;\sigma^{(3)}=(1,1,1),\;\sigma^{(4)}=(2,1,1),\;\sigma^{(5)}=(2,2,1).$
One can check that $\pi^{(i)}\natleq \sigma^{(i)}$ for all $i$, so $\Pi\domleq \Sigma$. Moreover we have following sequence of decreasing box moves transforming tableau $\Sigma$ into tableau $\Pi$:
\[\Pi= \ytableausetup{centertableaux}\ytableausetup{smalltableaux}
\ytableaushort
{13,25,4}*{2,2,1}\boxl \ytableaushort
{12,35,4}*{2,2,1}\boxl \ytableaushort
{124,35}*{3,2}\boxl \ytableaushort
{123,45}*{3,2}=\Sigma,\]
First in $\Sigma$ we swap entries $3$ and $4$, then we wind box with entry $4$ from the first to the  third row and finally (to get $\Pi$) we swap entries $2$ and $3$. It follows that $\Pi\boxleq \Sigma$.
\end{ex}

We are going to prove that $\domleq$ implies $\boxleq$. Let $r\in \mathbb{N}$ and let $\underline{r}$ denotes the partition 
$\underline{r}=(r,r,\ldots,r)$ of length
$r$ with all entries equal to $r$. We define  a~function
\begin{equation}
 f:\mathcal{T}_r\to \mathcal{T}_{\underline{r}}
\end{equation}
in the following way. 

Let $\Pi=[\pi^{(1)},\ldots,\pi^{(r)}]\in \mathcal{T}_r$. For any $s=0,\ldots, r^2-r-1$  we define recursively $\pi^{(r+s)}$:
\begin{equation}
 \overline{\pi^{(r+s+1)}_i}=\left\{ 
 \begin{array}{ll}
  \overline{\pi^{(r+s)}_i} & \mbox{ for } i\neq j_s+1 \\
  \overline{\pi^{(r+s)}_i}+1 & \mbox{ for } i= j_s+1
 \end{array}
 \right.
\end{equation}
where $j_s$ is equal to the number of rows of length $r$ in Young diagram of $\pi^{(r+s)}$. Formally by $j_s=j_s^\Pi\in\{1,\ldots,r-1\}$ we denote  the maximal number with the property
\begin{equation}\label{eq-delta-prop}
 \overline{\pi^{(r+s)}_{ j_s}}=r
\end{equation}
or if there is no number with the property, we set $j_s=0$.

It follows that,  for all $s$, $\overline{\pi^{(r+s+1)}}$ is a~partition. 
We set \begin{equation}
f(\Pi)=[\pi^{(1)},\ldots,\pi^{(r)},\pi^{(r+1)},\ldots,\pi^{(r^2)}].        
       \end{equation}
 It is easy to see that $f(\Pi)\in \mathcal{T}_{\underline{r}}$.

\begin{lem}\label{lem-dom-box}
 Let $\Sigma=[\sigma^{(1)},\ldots,\sigma^{(r)}], \Pi=[\pi^{(1)},\ldots,\pi^{(r)}]\in \mathcal{T}_r$. Then
 \begin{enumerate}
  \item $\Pi\leq_{\rm dom}\Sigma$ if and only if $f(\Pi)\leq_{\rm dom}f(\Sigma)$,
  \item if $f(\Pi)\leq_{\rm box}f(\Sigma)$, then $\Pi\leq_{\rm box}\Sigma$,
  \item if $\Pi\leq_{\rm dom} \Sigma$, then $\Pi\leq_{\rm box}\Sigma$.
 \end{enumerate}
\end{lem}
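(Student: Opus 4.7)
The plan is to prove parts (1), (2), (3) in this order, reducing (3) to (1), (2), and the main result of \cite{kst2019}, which establishes $\domleq\,=\,\boxleq$ on $\mathcal{T}_\alpha$ for any fixed shape $\alpha$ (applied here with $\alpha=\underline{r}$).

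For (1), I would set $D_i(\tau)=\sum_{j=1}^{i}\overline{\tau}_j$, so that $\tau\natleq\tau'$ iff $D_i(\tau)\leq D_i(\tau')$ for every $i$. Unfolding the filling rule (each added box extends the first row of length $<r$) and tracking how rows fill in order yields the closed formula
$$D_i(\pi^{(r+s)})=\min\bigl(D_i(\pi^{(r)})+s,\; i r\bigr),\qquad 0\leq s\leq r^2-r.$$
Since the right-hand side is monotone in $D_i(\pi^{(r)})$, the hypothesis $\pi^{(r)}\natleq\sigma^{(r)}$ forces $\pi^{(r+s)}\natleq\sigma^{(r+s)}$ for all $s$, giving $f(\Pi)\domleq f(\Sigma)$ from $\Pi\domleq\Sigma$. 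The converse is immediate by restricting indices to $i\leq r$.

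For (2), the key preliminary observation is that any chain of decreasing box moves witnessing $f(\Pi)\boxleq f(\Sigma)$ stays entirely within $\mathcal{T}_{\underline{r}}$. Indeed, a winding-up move strictly decreases the ambient shape in $\natleq$ (one checks that $D_j$ drops for $r_1\leq j<r_2$), and since $\boxleq$ is a partial order, a shape strictly smaller than $\underline{r}$ can never return to $\underline{r}$. So every intermediate tableau has shape $\underline{r}$ and every move is a swap. I then introduce the projection $g\colon\mathcal{T}_{\underline{r}}\to\mathcal{T}_r$ sending $T$ to its subtableau on the entries $\{1,\ldots,r\}$; this is well defined because in any SYT the positions of entries $\leq r$ form a Young sub-diagram, and clearly $g(f(\Pi))=\Pi$. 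Each swap $T\to T'$ of entries $a<b$ (with $a$ moving down) splits into three cases: both $\leq r$, mixed, or both $>r$. The small-small case is literally a decreasing swap in $g(T)$, and the large-large case fixes $g$. In the mixed case ($a\leq r<b$) the move $g(T)\to g(T')$ removes $a$'s old cell $P_a$ and places $a$ at $b$'s old cell $P_b$; arguing by contradiction, the Young-sub-diagram property of $g(T')$ forces $(i_a,j_a+1),(i_a+1,j_a)\notin g(T)$, so $P_a$ is a removable corner of $g(T)$, while the same property of $g(T)$ forces $(i_b,j_b+1)\notin g(T')$, so $P_b$ is last-in-row of $g(T')$, and $i_b>i_a$ means this is exactly a decreasing winding-up move. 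Concatenating along the chain yields $\Pi=g(f(\Pi))\boxleq\cdots\boxleq g(f(\Sigma))=\Sigma$. For (3) one then combines: $\Pi\domleq\Sigma$ gives $f(\Pi)\domleq f(\Sigma)$ by (1); since both lie in $\mathcal{T}_{\underline{r}}$, \cite{kst2019} upgrades this to $f(\Pi)\boxleq f(\Sigma)$; finally (2) concludes $\Pi\boxleq\Sigma$.

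The main obstacle is the mixed-swap analysis in (2): checking that the projection $g$ sends a swap of a small entry with a large one to a \emph{single} box move (a winding-up) and not to some more complicated shape modification outside the generating set of $\boxleq$. The point is that $P_a$ is not a priori a corner of $g(T)$; one has to carefully exploit the contiguity constraints enforced by $g(T')$ being a Young sub-diagram to deduce both removability of $P_a$ and the last-in-row property of $P_b$.
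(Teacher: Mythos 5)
Your proposal is correct, and its overall architecture is the one the paper uses and indeed the one the statement forces: establish (1) and (2) separately, then obtain (3) by passing through $\mathcal{T}_{\underline{r}}$, where $\domleq\,=\,\boxleq$ is known from \cite{kst2019}. The differences are in how (1) and (2) are executed. For (1), the paper runs an induction on $s$, tracking the indices $j_s^\Pi$ and $j_s^\Sigma$ where the new box is inserted and splitting into the cases $t\leq j_s$, $j_s<t\leq l_s$, $t>l_s$; your closed formula $D_i(\pi^{(r+s)})=\min\bigl(D_i(\pi^{(r)})+s,\,ir\bigr)$ encapsulates exactly the same mechanism (each step increments $D_i$ by one until it saturates at $ir$) but makes the monotonicity, and hence the equivalence in (1), immediate --- this is a genuine streamlining, and it also makes transparent that the extension depends only on $\pi^{(r)}$, so only the top inequality $\pi^{(r)}\natleq\sigma^{(r)}$ is needed for the added levels. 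For (2), the paper simply declares the statement obvious, whereas you supply the actual content: the observation that every chain realizing $f(\Pi)\boxleq f(\Sigma)$ stays inside $\mathcal{T}_{\underline{r}}$ (since a winding-up move strictly lowers the shape in $\natleq$ and both endpoints have the square shape), and the verification that the restriction map $g$ to the entries $\{1,\ldots,r\}$ sends each swap to a single legal box move --- with the mixed case $a\leq r<b$ correctly identified as the delicate one, where the standardness of $T$ and $T'$ forces $P_a$ to be last in its row of $g(T)$ and $P_b$ to be last in its row of $g(T')$, so that the induced move is precisely a winding-up. Your write-up therefore proves strictly more than the paper records, at the cost of length; nothing in it is wrong or missing.
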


\begin{proof}
 1. Assume that $\Pi\leq_{\rm dom}\Sigma$. It follows that $\pi^{(i)}\leq_{\rm nat}\sigma^{(i)}$ for all $i=1,\ldots,r$. By the induction on $s$ we prove that $\pi^{(r+s+1)}\leq_{\rm nat}\sigma^{(r+s+1)}$ for all $s=-1,0,\ldots,r^2-r-1$. For $s=-1$, the condition is obvious. Let $s\geq 0$ and let $l_s=j_s^\Sigma,j_s=j_s^\Pi$ be as above. 
 
 Note that $j_s\leq l_s$. Indeed, if $j_s=0$, we are done. Otherwise, by the choice of $j_s$ we have
 $\overline{\pi_{j_s+1}^{(r+s)}}<r$ and
 $\overline{\pi_{i}^{(r+s+1)}}=\overline{\pi_{i}^{(r+s)}}=r$ for $1\leq i\leq j_s$. Therefore by the induction we get:
 \begin{equation}
  \label{eq-js-ls}
  j_s\cdot r=\sum_{i=1}^{j_s}\overline{\pi_{i}^{(r+s)}}\leq \sum_{i=1}^{j_s}\overline{\sigma_{i}^{(r+s)}}\leq j_s\cdot r,
 \end{equation}
because all parts of all partitions we consider are less or equal $r$. It follows from (\ref{eq-js-ls}) that
$\overline{\sigma}_{i}^{(r+s)}=r$ for all $i=1,\ldots,j_s$. By the choice of $l_s$, we get $l_s\geq j_s$.

Note that for all $t$ we have
\begin{equation}
 \sum_{i=1}^t\overline{\sigma^{(r+s+1)}_i}=\left\{ 
 \begin{array}{ll} \label{eq-gamma}
  \sum_{i=1}^t\overline{\sigma^{(r+s)}_i} & \mbox{ for } i\leq l_s \\
  \sum_{i=1}^t\overline{\sigma^{(r+s)}_i}+1 & \mbox{ for } i\geq l_s+1
 \end{array}
 \right.
\end{equation}
and
\begin{equation}\label{eq-delta}
 \sum_{i=1}^t\overline{\pi^{(r+s+1)}_i}=\left\{ 
 \begin{array}{ll}
  \sum_{i=1}^t\overline{\pi^{(r+s)}_i} & \mbox{ for } i\leq j_s \\
  \sum_{i=1}^t\overline{\pi^{(r+s)}_i}+1 & \mbox{ for } i\geq j_s+1
 \end{array}
 \right.
\end{equation}
For $t\leq j_s$ and $t>l_s$, by the induction and (\ref{eq-gamma}), (\ref{eq-delta}) we get
$$
\sum_{i=1}^t\overline{\pi^{(r+s+1)}_i}\leq \sum_{i=1}^t\overline{\sigma^{(r+s+1)}_i}.
$$
If $j_s=l_s$ we are done.

Let $j_s<j_s+1\leq t\leq l_s$.  
By the choice of $j_s,l_s$ and the fact that 
$j_s<l_s$, we have $\overline{\pi_{i}^{(r+s)}} <\overline{\sigma_{i}^{(r+s)}}=r$ for all $i=j_s+1,\ldots,l_s$. Moreover by the induction hypothesis we have $
\sum_{i=1}^{j}\overline{\pi^{(r+s)}_i}\leq \sum_{i=1}^{j}\overline{\sigma^{(r+s)}_i},
$
for all $j$.
Therefore we get 
$
\sum_{i=1}^{t}\overline{\pi^{(r+s)}_i}< \sum_{i=1}^{t}\overline{\sigma^{(r+s)}_i}.
$
Applying the formulae (\ref{eq-gamma}), (\ref{eq-delta}) we deduce that $
\sum_{i=1}^t\overline{\pi^{(r+s+1)}_i}\leq \sum_{i=1}^t\overline{\sigma^{(r+s+1)}_i}
$ for all $t$. Since the converse implication is obvious
we are done.\medskip

The statement 2 is obvious. \medskip

The statement 3 is a~consequence of 1, 2, because by \cite[Theorem 1.1]{kst2019} we have:
$f(\Pi)\leq_{\rm dom} f(\Sigma)$ if and only if
$f(\Pi)\leq_{\rm box} f(\Sigma)$.
\end{proof}

\section{SYTs and LR-tableaux} \label{seq-SYT-LR}

In order to apply orders $\leq_{\rm dom}$ and $\leq_{\rm box}$ to investigate properties of some objects of the category $\mathcal{S}$, in this section we adopt the results for SYTs presented in section \ref{orders_SYT}  to the set of LR-tableaux that are rook strips.

\begin{defin}
Let $(\alpha,\beta,\gamma)$ be a~partition triple. An {\it LR-tableau} of shape $\beta\setminus\gamma$ and content $\alpha$ is a~Young diagram of shape $\beta$
(the {\it outer shape} of the tableau) in which 
the region $\beta\setminus\gamma$ is filled with $\overline{\alpha}_1$ entries
1, $\overline{\alpha}_2$ entries 2, etc., such that the three conditions are satisfied:
\begin{itemize}
 \item in each row the entries are weakly increasing;
 \item in each column the entries
are strictly increasing;
\item the lattice permutation property holds, that is,
on the right hand side of each column there occur at least as many entries
$e$ as there are entries $e+1$ ($e=1,2,\ldots$).
\end{itemize}
\end{defin}

\begin{rem}
An~LR--tableau $\Gamma$  of shape $\beta\setminus\gamma$ and content $\alpha$  can be identified with the 
 sequence
$\Gamma=[\gamma^{(0)},\ldots,\gamma^{(|\al|)}]$ of partitions such that
$\gamma^{(0)}\subseteq \gamma^{(1)}\subseteq \ldots\subseteq \gamma^{(|\al|)}$ and 
 the skew diagram
$\gamma^{(e)}\setminus\gamma^{(e-1)}$ carries all entries $\singlebox e$ for all $e=1,\ldots,|\al|$ and we set $\gamma^{(0)}=\gamma$, $\gamma^{|\alpha|}=\beta$.
\end{rem}

Let $\al,\beta,\gamma$ be partitions and $r\in\mathbb{N}$.
Throughout we assume that $\beta\setminus \gamma$ is a~rook strip. Let $\mathcal{T}_{\al,\gamma}^\beta$ (resp. $\mathcal{T}_{r,\gamma}^\beta$) be the set of all LR-tableaux of type $(\alpha,\beta,\gamma)$
(resp. the set of all LR-tableaux of type $(\alpha,\beta,\gamma)$ such that $|\alpha|=r$). Note that
$$
\mathcal{T}_{r,\gamma}^\beta=\bigcup^\bullet_{\alpha;|\al|=r}\mathcal{T}_{\al,\gamma}^\beta.
$$

Box order and dominance order were defined on LR-tableaux in \cite{ks-MZ}.
  We review these definitions.

  \begin{defin} Let $\beta,\gamma$ be partitions, $r=|\beta|-|\gamma|$ and let $\Delta,\Gamma\in \mathcal{T}_{r,\gamma}^\beta$.
  \begin{enumerate}
   \item
   The tableau $\Delta$ is obtained by a {\it decreasing
    box move} from the tableau
    $\Gamma$ (we write $\Delta\prec_{\rm box}\Gamma$) if $\Delta$ is obtained from $\Gamma$ by swapping two entries, such that the
    smaller entry winds up in the lower position (this move does not change the partition $\alpha$); or by increasing an~entry (this move changes the partition $\alpha$). The box-order $\leq_{\rm box}$ is the partial order
    generated by the relation $\prec_{\rm box}$.
 \item Let $\Delta=[\delta^{(0)},\delta^{(1)},\ldots,\delta^{(r)}]\in \mathcal{T}_{\alpha',\gamma}^\beta,\;\Gamma=[\gamma^{(0)},\gamma^{(1)},\ldots,\gamma^{(r)}]\in \mathcal{T}_{\al,\gamma}^\beta$. We say that $\Delta\leq _{\rm dom} \Gamma$ in the dominance order if 
  $\delta^{(i)}\natleq\gamma^{(i)}$ for all $i=0,1,\ldots,r$. 
  \end{enumerate}
  \end{defin}

  \begin{rem}
   Note that  for arbitrary $\Gamma=[\gamma^{(0)},\ldots,\gamma^{(r)}]\in \mathcal{T}_{\al,\gamma}^\beta$  we have 
   $$\sum\limits_{i=0}^c \overline{\alpha}_i=|\gamma^{(c)}|-|\gamma|.$$
   Therefore, if $\Delta\domleq \Gamma$, then  $\alpha\natleq\alpha'$.
  \end{rem}

\begin{ex}
Let 
\[\Delta= \ytableausetup{centertableaux}\ytableausetup{smalltableaux}\ytableaushort
{\none\none\none\none1,\none\none\none2,\none\none1,\none3,2}*{5,4,3,2,1}
\;\;{\rm and}\;\;\Gamma=\ytableaushort
{\none\none\none\none1,\none\none\none1,\none\none1,\none2,2}*{5,4,3,2,1}\]
be LR-tableaux of shape $\beta\setminus\gamma$ and content $\alpha$ and $\alpha'$ respectively for $\gamma=(4,3,2,1),\;\beta=(5,4,3,2,1),\; \alpha=(3,2)$ and $\alpha'=(2,2,1).$
The LR-tableaux $\Delta,\Gamma$ are identified with the 
 sequences
\[\Delta=[\delta^{(0)},\ldots,\delta^{(5)}],\Gamma=[\gamma^{(0)},\ldots,\gamma^{(5)}],\]
where $\delta^{(0)}=\gamma,\;\delta^{(1)}=(4,3,3,1,1),\;\delta^{(2)}=(5,3,3,2,1),\;\delta^{(3)}=\delta^{(4)}=\delta^{(5)}=\beta$ and
$\gamma^{(0)}=\gamma,\;\gamma^{(1)}=(4,3,3,2,1),\;\gamma^{(2)}=\gamma^{(3)}=\gamma^{(4)}=\gamma^{(5)}=\beta.$
One can check that $\delta^{(i)}\natleq \gamma^{(i)}$ for all $i$, so $\Delta\domleq \Gamma$. Moreover we have following sequence of decreasing box moves transforming tableau $\Gamma$ into tableau 
$\Delta$:
\[\Delta= \ytableausetup{centertableaux}\ytableausetup{smalltableaux}\ytableaushort
{\none\none\none\none1,\none\none\none2,\none\none1,\none3,2}*{5,4,3,2,1}\boxl  \ytableaushort
{\none\none\none\none1,\none\none\none2,\none\none1,\none1,2}*{5,4,3,2,1}\boxl \ytableaushort
{\none\none\none\none1,\none\none\none1,\none\none1,\none2,2}*{5,4,3,2,1}=\Gamma,\]
so $\Delta\boxleq \Gamma$.

\end{ex}

In \cite[Section 3]{kst2019} a~bijection
$$
\Phi^\beta_\gamma :\mathcal{T}_{\al,\gamma}^\beta\to\mathcal{T}_\al
$$
is defined as follows.
   For an LR-tableau $\Gamma$ we denote by $\tau(\Gamma)=(\tau_1,\ldots,\tau_{|\alpha|})$ the
 list of entries when
 reading columns from the top down, starting with the rightmost
  column and moving left. Fix $i\leq s$,
  where $s$ is
  the number of rows of $\alpha$. Let $j_1<j_2<\ldots<j_{n_i}$ be all elements $j$ such that
  $\tau_j=i$.  We write the 
  elements $j_1,j_2,\ldots,j_{n_i}$
  in the $i$-th row of the corresponding SYT of shape $\alpha$. This bijection has the property that given $\Delta,\Gamma\in \mathcal{T}_{\al,\gamma}^\beta$ the relation $\Delta\domleq\Gamma$ (resp. $\Delta\boxleq\Gamma$) holds if and only if
$\Phi^\beta_\gamma(\Delta)\domleq\Phi^\beta_\gamma(\Gamma)$ (resp. $\Phi^\beta_\gamma(\Delta)\boxleq\Phi^\beta_\gamma(\Gamma)$), see \cite[Lemma 3.1,Lemma 3.2]{kst2019}. 

 The following example is presented in \cite{kst2019}.  Consider LR-tableaux, which are related by the box move swapping the entries in rows 3 and 6. 
\[\Delta= \ytableausetup{centertableaux}\ytableausetup{smalltableaux}\ytableaushort
{\none\none\none\none\none1,\none\none\none\none1,\none\none\none2,\none\none2,\none3,1}*{6,5,4,3,2,1}
\;\;\boxleq\;\;\Gamma=\ytableaushort
{\none\none\none\none\none1,\none\none\none\none1,\none\none\none1,\none\none2,\none3,2}*{6,5,4,3,2,1}
\]

The corresponding SYTs are
\[\Phi^\beta_\gamma(\Delta):= \ytableausetup{centertableaux}\ytableausetup{smalltableaux}\ytableaushort
{126,34,5}*{3,2,1}
\;\; \;\;\boxleq\;\;\;\;\Phi^\beta_\gamma(\Gamma):= \ytableausetup{centertableaux}\ytableausetup{smalltableaux}\ytableaushort
{123,46,5}*{3,2,1}
\]

  Starting from
  $\Phi^\beta_\gamma(\Gamma)$ we can get $\Phi^\beta_\gamma(\Delta)$ by
  first swapping 3 and 4, then 4 and 6.

In the natural way we extend $\Phi_\gamma^\beta$ to the bijection
\begin{equation}\label{eq-phi-gamma-beta}
 \Phi^\beta_\gamma :\mathcal{T}_{r,\gamma}^\beta\to\mathcal{T}_r
\end{equation}

\begin{lem}\label{lem-LR-SYT-box-dom}
Given $\Delta,\Gamma\in \mathcal{T}_{r,\gamma}^\beta$:
\begin{enumerate}
 \item $\Delta\domleq \Gamma$ if and only if $\Phi^\beta_\gamma(\Delta)\domleq \Phi^\beta_\gamma(\Gamma)$,
 \item $\Delta\boxleq \Gamma$ if and only if $\Phi^\beta_\gamma(\Delta)\boxleq \Phi^\beta_\gamma(\Gamma)$.
\end{enumerate}
\end{lem}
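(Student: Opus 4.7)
The plan is to bootstrap this lemma from \cite[Lemma 3.1, Lemma 3.2]{kst2019}, which give the corresponding equivalences for $\Phi^\beta_\gamma$ restricted to a single content class $\mathcal{T}_{\alpha,\gamma}^\beta\to\mathcal{T}_\alpha$, and to promote them to the union $\mathcal{T}_{r,\gamma}^\beta\to\mathcal{T}_r$ by exploiting the rook-strip geometry of $\beta\setminus\gamma$. The pivotal observation is that, since $\beta\setminus\gamma$ is a rook strip, its cells lie one per row and one per column, and top-alignment of the Young diagrams forces the row index of the unique cell in each column to strictly increase as one moves from the rightmost to the leftmost column. Writing the cells of $\beta\setminus\gamma$ as $(i_1,c_1),\ldots,(i_r,c_r)$ with $i_1<\cdots<i_r$ and $c_1>\cdots>c_r$, the reading order visits them in exactly that order, so reading position $j$ sits in row $i_j$ of $\beta$.

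For part~(1) I would unpack both dominance conditions combinatorially. For $X\in\mathcal{T}_{r,\gamma}^\beta$ with sequence $[\chi^{(0)},\ldots,\chi^{(r)}]$ the identity
\[
\overline{\chi^{(e)}}_k=\overline{\gamma}_k+\#\{\text{cells of }X\text{ in row }k\text{ of }\beta\setminus\gamma\text{ with value}\leq e\}
\]
reduces the defining system $\delta^{(e)}\natleq\gamma^{(e)}$ of $\Delta\domleq\Gamma$ to the inequalities $N_\Delta(c,e)\leq N_\Gamma(c,e)$ for all $c,e$, where $N_X(c,e)$ denotes the number of cells of $X$ in rows $1,\ldots,c$ of $\beta\setminus\gamma$ carrying a value $\leq e$. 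An analogous unpacking on the SYT side reduces $\Phi^\beta_\gamma(\Delta)\domleq\Phi^\beta_\gamma(\Gamma)$ to $M_\Delta(i,v)\leq M_\Gamma(i,v)$ for all $i,v$, where $M_X(i,v)$ counts the first $i$ reading positions of $X$ that hold a value $\leq v$. By the rook-strip observation, ``cells in rows $1,\ldots,c$ of $\beta\setminus\gamma$'' coincides with ``first $k(c)$ reading positions'' for $k(c)=\max\{j:i_j\leq c\}$, so the two systems of inequalities are literally the same set of constraints parametrized differently, which proves~(1).

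For part~(2) I would generate $\boxleq$ from individual decreasing box moves and match them on both sides. A swap-type move (preserving the content on the LR side, or the shape on the SYT side) is already handled inside each content class by \cite[Lemma 3.2]{kst2019}. A content-changing move -- an LR increase of the value $v\to v'$ at the cell with reading position $j$ -- is sent by $\Phi^\beta_\gamma$ to the SYT operation that detaches entry $j$ from row $v$ of $\Phi^\beta_\gamma(\Gamma)$ and reinserts it at the end of row $v'$ of $\Phi^\beta_\gamma(\Delta)$. Checking that the row/column/lattice-word conditions on $\Delta$ force $j$ to be the last entry of row $v$ in $\Phi^\beta_\gamma(\Gamma)$ and to land as the last entry of row $v'$ in $\Phi^\beta_\gamma(\Delta)$ will show that the image is a shape-changing decreasing SYT box move; the converse direction runs the same argument backwards, verifying LR validity. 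The main technical obstacle is precisely this content-changing case: reconciling the lattice-word condition on the LR side with the ``last box of a row'' condition on the SYT side, where the rook-strip hypothesis is what ultimately forces the two sets of admissible moves to match rigidly.
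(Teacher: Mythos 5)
Your argument for part~(1) matches the paper's: both reduce the two dominance conditions to one and the same family of counting inequalities, using the fact that a rook strip has one cell per row and per column so that ``cells in the first $c$ rows of $\beta\setminus\gamma$'' is a prefix of the reading word. That part is fine.

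Part~(2) contains a genuine gap. You claim that for a content-changing LR move --- increasing the value at reading position $j$ from $v$ to $v'$ --- the LR axioms force $j$ to be the last entry of row $v$ of $\Phi^\beta_\gamma(\Gamma)$ and to land as the last entry of row $v'$ of $\Phi^\beta_\gamma(\Delta)$, so that the image is a single shape-changing SYT box move. This is false: row $v$ of the SYT consists of \emph{all} reading positions carrying the value $v$, and nothing in the semistandard or lattice-word conditions prevents positions larger than $j$ from also carrying the value $v$, nor from carrying the value $v'$. The paper's own illustration after the proof is a counterexample: a single LR increase of an entry sends the SYT with rows $(1,3,4,7,8,9)$, $(2,5)$, $(6)$ to the SYT with rows $(1,4,7,8,9)$, $(2,5)$, $(3,6)$, and moving the entry $3$ from row $1$ to row $3$ is not a legal decreasing box move ($3$ is neither the last entry of its source row before the move nor the last entry of the target row after it). The correct statement is that the image of one content-changing LR move is a \emph{composite} of legal SYT moves, and the paper supplies the needed decomposition in three steps: a chain of entry swaps (handled by \cite[Lemma 3.1]{kst2019}) that brings the largest entry of the target row into the slot occupied by $j$, then one winding move of the now-last box of the source row, then further swaps to put $j$ in its final place --- together with a check that all intermediate tableaux are standard. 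Without this decomposition the forward implication of~(2) does not go through. (The converse direction is essentially unproblematic, since a single SYT winding move does pull back to a single LR entry increase.)
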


\begin{proof}
\begin{enumerate}
 \item

Note that the sum of all entries not greater than $l$ in the first $c$ rows of  $\Phi^\beta_\gamma(\Gamma)$ is equal the sum of all non-empty boxes with entries not greater than $c$ in the first $l'$ rows of $\Gamma$, where $l'$ is the number of row containing $l$-th non-empty box.
 More precisely, for $\Gamma=[\lambda^{(0)},\ldots,\lambda^{(s)}]$ and $\Phi^\beta_\gamma(\Gamma)=[\pi^{(0)},\ldots,\pi^{(s)}]$ we have:
$$\sum\limits_{i=0}^c\overline{
\pi}_i^{(l)}=\sum\limits_{i=0}^{l'}\overline{\lambda}_i^{(c)}-\sum\limits_{i=0}^{l'}\overline{\gamma}_i^{(c)},$$ 

Similarly for  $\Delta$. The partition $\gamma$ is the same for $\Gamma$ and $\Delta$, so 1 follows.

\item Let $\Delta,\Gamma\in \mathcal{T}_{r,\gamma}^\beta$. Suppose $\Delta$ is obtained from $\Gamma$ by a single decreasing box move. If $\Delta$ is obtained from $\Gamma$ by a swapping the positions of two entries, then $\Phi^\beta_\gamma(\Delta)\boxleq
\Phi^\beta_\gamma(\Gamma)$, by \cite[Lemma 3.1]{kst2019}. Otherwise $\Delta$ is obtained from $\Gamma$  by increasing an~entry. Suppose that the entry $\tau_s=e$ in $\Gamma$ is replied by the entry $\tau_s = f$ in $\Delta$ for some $e<f$. \medskip

By the definition of $\Phi^\be_\ga$ we  obtain $\Phi^\beta_\gamma(\Delta)$  from $\Phi^\beta_\gamma(\Gamma)$ by winding up the  
entry $\singlebox s$ from the row $e$ to the row $f$
and resorting the entries in rows $f$ and $e$ if necessary. This resorting step is not allowed in our definition of decreasing box moves for SYTs.

We show that there exists a sequence of admissible  decreasing box moves which  transform $\Phi^\beta_\gamma(\Gamma)$ into $\Phi^\beta_\gamma(\Delta)$.

If the row $f$ in $\Phi^\beta_\gamma(\Gamma)$ is empty, we can wind up the last entry of the row $e$ to the row $f$ and go to third step in the procedure below. Otherwise, let the entry $\singlebox j$ be the largest entry in row $f$ of $\Phi^\beta_\gamma(\Gamma)$. If $s>j$, then we can wind up the entry $\singlebox s$ after the last entry of  the row $f$ and get $\Phi^\beta_\gamma(\Delta)$. Otherwise $s<j$ and we divide the procedure into three steps. 

{\bf First step.} In the LR-tableau $\Gamma$  we swap the entry $\tau_j=f$ with $\tau_s=e$. Since $\Delta$ and $\Gamma$ are LR-tableaux, we get an~LR-tableau $\Gamma'$. Note that $\Pi=\Phi^\beta_\gamma(\Gamma')$ is obtained from $\Phi^\beta_\gamma(\Gamma)$ by swapping the entry $\singlebox s$ with the entry $\singlebox j$ in   and resorting the entries in rows $f$ and $e$ if necessary. We reduced the problem to \cite[Lemma 3.1]{kst2019}. Therefore this transformation can be realized by execute the sequence of swapping two entries in $\Phi^\beta_\gamma(\Gamma)$ so that the smaller entry winds up in the higher--numbered row (for the procedure see the proof of \cite[Lemma 3.1]{kst2019}).

{\bf Second step.} Note that in the tableau $\Pi$ obtained from $\Phi^\beta_\gamma(\Gamma)$ in the first step the last entry  of row $e$ (that is bigger or equal than $j$) is greater than  the last entry of row $f$ (because $j$ was the largest entry in row $f$ before swapping).  So we can execute decreasing box move  to $\Pi$ which winding up the last box of the $e$-th row to the last position of the $f$-th row in this tableau.

{\bf Third step.} If the entry winded up to the $f$-th row  is equal to $\singlebox s$,
then tableau $\Pi'$ obtained in this step is $\Phi^\beta_\gamma(\Delta)$ and we are done.  Otherwise we obtain $\Phi^\beta_\gamma(\Delta)$ by swapping this entry and the box $\singlebox s$ in the same way as in the first step.

The fact that all the tableaux we pass through are indeed standard, follows from the described procedure and the fact that the first and last tableaux are standard.

 We showed that  if $\Delta$ is obtained from $\Gamma$ by a~single decreasing box move, then $\Phi^\beta_\gamma(\Delta)\boxleq \Phi^\beta_\gamma(\Gamma).$ By the definition of box orders  $\Delta\boxleq \Gamma$ implies $\Phi^\beta_\gamma(\Delta)\boxleq \Phi^\beta_\gamma(\Gamma).$

It is easy to see that the converse implication is also true: if there is a decreasing box move from $\Phi^\beta_\gamma(\Gamma)$ to $\Phi^\beta_\gamma(\Delta)$, then the corresponding move from $\Gamma$ to $\Delta$ is a decreasing box move. Indeed, if $\Phi^\beta_\gamma(\Gamma)$ and $\Phi^\beta_\gamma(\Delta)$ have the same shape, it follows from \cite{kst2019}; otherwise the decreasing box move from $\Phi^\beta_\gamma(\Gamma)$ to $\Phi^\beta_\gamma(\Delta)$ implies that $\Delta$ is obtained from $\Gamma$ by increasing an~entry.
\end{enumerate}
\end{proof}

To illustrate procedure described in the proof  
consider the following LR-tableaux, which are related by the decreasing box move which increases  entry $\singlebox 1$ in row 3.
$$\Delta = \ytableausetup{centertableaux}\ytableausetup{smalltableaux}
\ytableaushort
{\none\none\none\none\none\none\none\none1,\none\none\none\none\none\none\none2, \none\none\none\none\none\none3,\none\none\none\none\none1,\none\none\none\none2,\none\none\none3,\none\none1,\none1,1}*{9,8,7,6,5,4,3,2,1}
\boxl 
\ytableaushort
{\none\none\none\none\none\none\none\none1,\none\none\none\none\none\none\none2, \none\none\none\none\none\none1,\none\none\none\none\none1,\none\none\none\none2,\none\none\none3,\none\none1,\none1,1}*{9,8,7,5,4,3,2,1}=\Gamma
$$
The corresponding standard Young tableaux are
$$\Phi^\beta_\gamma(\Delta)= \ytableausetup{centertableaux}\ytableausetup{smalltableaux}
\ytableaushort
{14789,25,36}*{5,2,2}
\quad\quad \Phi^\beta_\gamma(\Gamma)=
\ytableaushort
{134789,25,6}*{6,2,1}
$$
The winding up the entry $\singlebox 3$ of $\Phi^\beta_\gamma(\Gamma)$ from the first row to the third row  is not a legal decreasing box move. But $\Phi^\beta_\gamma(\Delta)$ can be obtain from  $\Phi^\beta_\gamma(\Gamma)$  by the sequence of the following decreasing box moves:
\begin{enumerate}
\item swap $6$ and $4$,
\item swap $4$ and $3$,
\item wind up box $\singlebox 9$ to the third row,
\item swap $9$ and $8$,
\item swap $8$ and $7$
\item swap $7$ and $6$.
\end{enumerate}
The points 1. and 2. correspond to the first step described in the proof, the point 3. corresponds to the second step and the points 4., 5., 6. correspond to the last step.

\section{Invariant subspaces of nilpotent linear operators}\label{sec-Lambda-modules}

 Let $k$ be a~field and let $\Lambda=k[p]$ be the algebra of polynomials with one variable $p$. 
By $\mod_0\Lambda$ we denote the category of all nilpotent linear operators,  i.e.  $\Lambda$-modules is isomorphic to 
 $$N_\alpha= N_\alpha(\Lambda)=\Lambda/(p^{\alpha_1})\oplus \Lambda/(p^{\alpha_2})\oplus \ldots\oplus \Lambda/(p^{\alpha_n}),$$
 where $\alpha_1\geq \alpha_2\geq\ldots\geq \alpha_n$. 
 The function 
 $N_\alpha\mapsto \alpha=(\alpha_1,\ldots,\alpha_n)$ defines a bijection
 between the set of all isomorphism classes of nilpotent finite dimensional $\Lambda$-modules 
 and the set $\mathcal{P}$ of all partitions. 
\medskip

\subsection{Invariant subspaces and LR-tableaux}
\label{sec-poles}

Let $\mathcal S(\Lambda)$ be the category of all short exact sequences of modules
in $\mod_0\Lambda$ with morphisms given by commutative diagrams.  
With the componentwise exact structure,
$\mathcal S(\Lambda)$ is an exact Krull-Remak-Schmidt category, see \cite{RSch}.
For brevity we sometimes use notation: $A=N_\alpha$, $B=N_\beta$, $C=N_\gamma$ and objects in $\mathcal S(\Lambda)$, that is short exact sequences
$0\to A\to B\to C\to 0$ of $\Lambda$-modules, we denote as embeddings
$(A\subset B)$.

\medskip
An embedding $(A\subset B)$ with corresponding short exact
sequence $0\to A\to B\to C\to 0$ yields three partitions
$\alpha,\beta,\gamma$ which describe the isomorphism types of 
$A,B,C$, respectively.
 We call this partition triple $(\alpha,\beta,\gamma)$ {\it type} of the embedding $(A\subset B)$ (or equivalently of the corresponding short exact sequence).  
The Green-Klein Theorem \cite{macd,klein68}  states that a~partition triple
$(\alpha,\beta,\gamma)$ is the type of a~short exact 
sequence in $\mathcal{S}(\Lambda)$ if and only if there exists an LR-tableau
of shape $\beta\setminus\gamma$ and content $\alpha$.
The tableau of an embedding can be obtained in the following way.

\medskip

\begin{defin}
Let $X=(A\subset B)$ be an object of $\mathcal{S}(\Lambda)$. Suppose that the submodule $A$ of $B$ has Loewy
length $r$.  Then the sequence of epimorphisms
$$B=B/p^rA\twoheadrightarrow B/p^{r-1}A\twoheadrightarrow \cdots\twoheadrightarrow B/pA
\twoheadrightarrow B/A$$
induces a~sequence of inclusions of partitions
$$\beta=\gamma^{(r)}\supset\gamma^{(r-1)}\supset\cdots
  \supset\gamma^{(1)}\supset\gamma^{(0)}=\gamma,$$
where  $N_{\gamma^{(e)}}\simeq B/p^eA$. By \cite[II(3.4)]{macd} the tableau $[\gamma^{(0)},\ldots,\gamma^{(r)}]$ is an~LR-tableau. The partitions define the corresponding LR-tableau
$$LR(X)=\Gamma=[\gamma^{(0)}, \gamma^{(1)},\ldots,\gamma^{(r)}]$$ of $X.$

\end{defin}

\medskip
The {\it union} of two tableaux is taken row-wise, so if
${\rm E}=[\varepsilon^{(i)}]_{0\leq i\leq s}$ and ${\rm Z}=[\zeta^{(i)}]_{0\leq i\leq t}$ 
are tableaux then
the partitions $\gamma^{(i)}$ for $\Gamma={\rm E}\cup{\rm Z}$ are given by taking
the union (of ordered multi-sets):  
$\gamma^{(i)}=\varepsilon^{(i)}\cup\zeta^{(i)}$ 
where $\varepsilon^{(i)}=\varepsilon^{(s)}$
for $i\geq s$ and $\zeta^{(i)}=\zeta^{(t)}$ for $i\geq t$.

\begin{rem}
It is easy to observe that if embeddings $(A\subset B)$, $(C\subset D)$
have tableaux $\Gamma_1$, $\Gamma_2$, respectively, then the direct sum
$(A\subset B)\oplus (C\subset D)$ has tableau 
$\Gamma=\Gamma_1\cup \Gamma_2$. 
\end{rem}

\subsection{Two types of invariant subspaces}
\label{sec-three-embeddings}
\medskip
First we review Kaplansky's classification of cyclic embeddings, given in \cite{kap}.

\medskip
We call an embedding $(A\subset B)$ {\it cyclic}
if the submodule $A$ is a~cyclic $\Lambda$-module,
that is, if $A$ is either indecomposable (i.e. of the form $\Lambda/(p^n)$ for some $n\geq 1$) or zero.
A~cyclic embedding $(A\subset B)$ 
is called a~{\it pole} if $A$ is an indecomposable $\Lambda$-module
and if $(A\subset B)$ is an indecomposable object in $\mathcal S(\Lambda)$.
An embedding of the form $(0\subset B)$ is called {\it empty.} 
By $E_\beta$ we denote the empty embedding
  $(0\subset N_\beta)$.\medskip

Following \cite{kap} (see also \cite{ks-JA}) we use height sequences to give classification of poles.

\begin{defin}\begin{itemize}
  \item A {\it height sequence} is a~sequence in $\mathbb N_0\cup\{\infty\}$
      which is strictly increasing and has finitely many elements from $\mathbb{N}_0$.
    A~height sequence is {\it non-empty}, if it has at least
    one element from $\mathbb{N}_0$.
  \item An element $a\in B\in\mod\Lambda$ has {\it height} $m$
    if $a\in p^m B\setminus p^{m+1}B$.
    In this case we write $h(a)=m$ and set $h(0)=\infty$.
  \item   The {\it height sequence for $a$ in $B$} is 
    $H_B(a)=(h(p^ia))_{i\in\mathbb N_0}$.
    Usually, we do not write the entries $\infty$.
  \item  A~sequence 
$(m_i)$ has a~{\it gap} after $m_\ell$ if $m_{\ell+1}>m_\ell+1$.  
  \end{itemize}
\end{defin}

The following result is proved in \cite[Theorem 25]{kap}.

\begin{prop}\label{prop-poles}
There is a~bijection
$$\{\text{poles}\}/_{\cong} \quad \stackrel{1-1}\longleftrightarrow \quad 
\{\text{finite non-empty strictly increasing sequences in $\mathbb N_0$}\}.$$
\end{prop}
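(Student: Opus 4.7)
The plan is to establish the bijection by sending a pole $(A\subset B)$ to the height sequence $H_B(a)$, where $a$ is any generator of $A=\Lambda a$ and $n$ is the order of $a$, so $H_B(a)=(h_B(a),h_B(pa),\ldots,h_B(p^{n-1}a))$. First I would verify that $H_B(a)$ is finite, non-empty, and strictly increasing in $\mathbb{N}_0$: finiteness because $p^na=0$, non-emptiness because the pole hypothesis requires $A\neq 0$, and strict monotonicity because $p^ia\in p^{h_B(p^ia)}B$ immediately forces $p^{i+1}a\in p^{h_B(p^ia)+1}B$.

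Next I would check well-definedness on isomorphism classes. Any two generators of the cyclic submodule $A$ differ by multiplication by some $u\in\Lambda$ whose image in $\Lambda/\mathrm{ann}(a)$ is a unit, so $u=u_0+u_1p+u_2p^2+\cdots$ with $u_0\in k^\times$. A short calculation using strict monotonicity gives $h_B(p^j\cdot ua)=h_B(p^ja)$ for every $j$, so the sequence depends only on the submodule $A\subseteq B$. Isomorphism invariance is then immediate because any $\Lambda$-module isomorphism $B\to B'$ preserves the height filtration.

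Injectivity is the core difficulty. Given a pole with height sequence $(h_0<h_1<\cdots<h_{n-1})$, partition the sequence into its maximal gap-free blocks, whose boundaries are the indices $i$ with $h_{i+1}>h_i+1$. The plan is to inductively split off, for each block, a cyclic direct summand $\Lambda b_j$ of $B$ whose internal height structure realizes the heights appearing in that block, so that $a$ decomposes as $a=\sum_j p^{e_j}b_j$ for exponents $e_j$ read off from the sequence. Indecomposability of $(A\subset B)$ then forces $B=\bigoplus_j\Lambda b_j$, with no residual summand disjoint from $A$, and this reconstructs $(A\subset B)$ up to isomorphism from $(h_0,\ldots,h_{n-1})$ alone. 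Surjectivity is the reverse construction: for any prescribed sequence, mirror the block decomposition by defining $B$ as a direct sum of cyclic $\Lambda$-modules (one per maximal gap-free block) and specifying $a$ as the corresponding sum of $p$-power multiples of the generators; a direct computation then recovers the prescribed height sequence and the indecomposability of the resulting embedding.

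The hard part will be the inductive splitting step: one must show that each block-generator $b_j$ can be chosen so that $\Lambda b_j$ is a genuine direct summand of $B$ and that the embedding on the complementary summand is again a pole whose height sequence is the truncation obtained by deleting the block just split off. The gap structure carries exactly the combinatorial information needed to execute this splitting, because a gap signals that the corresponding height jump cannot be realized inside a single cyclic $\Lambda$-module and therefore demands a new summand; absence of gaps within a block is precisely what allows a single cyclic summand to absorb the whole block.
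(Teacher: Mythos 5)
Your strategy is the right one, and it is essentially the proof the paper itself defers to: Proposition~\ref{prop-poles} is not proved in the paper but cited from \cite[Theorem 25]{kap}, and the example following it spells out exactly your surjectivity construction (one cyclic summand $N_{(m_{i_j}+1)}$ per maximal gap-free block, with $a=\sum_j p^{\ell_j}b_{\beta_j}$, $\ell_j=m_{i_j}-i_j$). Your verification that the height sequence is strictly increasing and your well-definedness argument (replacing $a$ by $ua$ with $u$ a unit modulo $\mathrm{ann}(a)$ does not change any $h(p^ja)$, by the strict monotonicity of heights) are both correct.

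There is, however, a genuine gap at the one point where all the difficulty is concentrated, and you have named it without closing it. Your injectivity argument rests entirely on the claim that each gap after $m_\ell$ produces a cyclic direct summand $N_{(m_\ell+1)}$ of $B$ whose generator $b$ can be chosen so that the corresponding component of $a$ is $p^{m_\ell-\ell}b$, and so that the residual element has the truncated height sequence in a complementary summand. This is precisely \cite[Lemma 22]{kap}, quoted in the paper as Lemma~\ref{lemma-gap}, and it is the only nontrivial content of the proposition; saying that ``the gap structure carries exactly the combinatorial information needed to execute this splitting'' restates what must be proved rather than proving it. A smaller but real omission on the surjectivity side: you must also show that the constructed embedding $P((m_i))$ is indecomposable in $\mathcal S(\Lambda)$ (otherwise it is merely a cyclic embedding, not a pole), and the ``direct computation'' you invoke again needs the splitting lemma --- e.g.\ to argue that any direct summand of $B$ containing $a$ must contain a copy of every $N_{(m_{i_j}+1)}$ and hence equal $B$. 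To complete the proof you would either have to prove the splitting lemma (say by descending induction on the gap positions, working in $p^{m_\ell}B/p^{m_\ell+1}B$) or cite Kaplansky, as the paper does.
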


\begin{ex}
Let  $m=(m_i)_{0\leq i\leq n}$ be a~strictly increasing
sequence in $\mathbb N_0$. Following \cite{kap}, 
we describe a~cyclic embedding $(A\subset B)$ corresponding to $m$ under the bijection in Proposition \ref{prop-poles}. 
Let $i_1>i_2>\cdots>i_s$ be  such that 
$(m_i)$  has gaps exactly after the entries 
$m_{i_1}>m_{i_2}>\cdots>m_{i_s}$.
For $1\leq j\leq s$ put $\beta_j=m_{i_j}+1$
and $\ell_j=m_{i_j}-i_j$, then $\beta$ and $\ell$ are strictly decreasing
sequences of positive and nonnegative integers, respectively.  
Let $$B=N_\beta=\bigoplus_{i=1}^s \Lambda/(p^{\beta_i})$$
be generated by elements $b_{\beta_j}$ of order $p^{\beta_j}$.
Let $a=\sum_{j=1}^s p^{\ell_j}\cdot b_{\beta_j}$ and put $A=(a)$. 
This gives a~cyclic embedding $P((m_i))=(A\subset B)$.
 
\end{ex}

\begin{ex} The following example is taken from \cite{ks-JA}. 
  The height sequence $(1,3,4)$ has gaps after $1$ and $4$, and
  hence gives rise to the embedding
  $P((1,3,4))=((p^2b_5+pb_2)\subset N_{(5,2)})$.
\end{ex}

\begin{lem}
  \label{lemma-gap}
  Suppose the height sequence $(m_i)$ of some element $a$ in $B$ has a gap after $m_\ell$.
  Then $N_{(m_\ell+1)}$ occurs as a direct summand of $B$.
\end{lem}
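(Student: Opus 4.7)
The plan is to exhibit an element $c \in B$ of height $0$ and order $p^{m_\ell+1}$ whose height sequence has no gaps; such a cyclic submodule $(c)\cong N_{(m_\ell+1)}$ is automatically pure in $B$, and hence a direct summand by Kaplansky's classical purity criterion from \cite{kap}. The construction proceeds in two stages. First, since $p^\ell a$ has height exactly $m_\ell$, I can write $p^\ell a = p^{m_\ell}c_0$ for some $c_0 \in B$; any such $c_0$ must itself have height $0$, for otherwise $p^\ell a$ would already lie in $p^{m_\ell+1}B$. The gap assumption $m_{\ell+1}\geq m_\ell+2$ then yields
$$ p^{m_\ell+1}c_0 \;=\; p^{\ell+1}a \;\in\; p^{m_{\ell+1}}B \;\subseteq\; p^{m_\ell+2}B, $$
so I can pick $d_0 \in B$ with $p^{m_\ell+1}c_0 = p^{m_\ell+2}d_0$. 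The candidate generator is then $c := c_0 - p d_0$.

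The verification of the required properties of $c$ is then routine manipulation with heights. Because $c \equiv c_0 \pmod{pB}$, the element $c$ still has height $0$. A direct computation gives $p^{m_\ell+1}c = p^{m_\ell+1}c_0 - p^{m_\ell+2}d_0 = 0$, while $p^{m_\ell}c = p^\ell a - p^{m_\ell+1}d_0$ has the same height as $p^\ell a$, namely $m_\ell$; hence $c$ has order exactly $p^{m_\ell+1}$ and $(c) \cong N_{(m_\ell+1)}$. The crucial refinement is that the full height sequence of $c$ is $(0,1,\ldots,m_\ell)$ with no internal gap: if $p^j c$ were to lie in $p^{j+1}B$ for some $j \leq m_\ell$, then multiplying by $p^{m_\ell - j}$ would force $p^{m_\ell}c \in p^{m_\ell+1}B$, contradicting what was just established.

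Finally, I would invoke the purity criterion on $(c)$: a bounded cyclic submodule generated by a height-zero element with gap-free height sequence is pure in its ambient $\Lambda$-module, and a pure bounded submodule splits off as a direct summand. Applied to $c$, this gives the desired decomposition $B \cong N_{(m_\ell+1)} \oplus B'$. The step I expect to be the main obstacle in a fully detailed write-up is the clean invocation of this purity criterion in exactly the form needed (gap-free height sequence of a generator implies purity, and purity plus boundedness implies direct summand); the rest of the argument is bookkeeping with the gap hypothesis and the definition of height.
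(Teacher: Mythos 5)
Your argument is correct. Note first that the paper does not actually prove this lemma: it simply cites Kaplansky (Lemma~22 and p.~29 of \cite{kap}), so there is no in-paper proof to compare against; what you have written is, in essence, a self-contained reconstruction of Kaplansky's classical argument. The key steps all check out: $c_0$ with $p^\ell a = p^{m_\ell}c_0$ exists and has height $0$ because $h(p^\ell a)=m_\ell$ exactly; the gap hypothesis $m_{\ell+1}\geq m_\ell+2$ (including the degenerate case $p^{\ell+1}a=0$, where the height is $\infty$) gives $p^{m_\ell+1}c_0\in p^{m_\ell+2}B$; the correction $c=c_0-pd_0$ kills $p^{m_\ell+1}c$ while $p^{m_\ell}c=p^\ell a-p^{m_\ell+1}d_0$ keeps height exactly $m_\ell$ since the two terms have distinct heights; and the gap-free height sequence $(0,1,\ldots,m_\ell)$ follows by the multiplication-by-$p^{m_\ell-j}$ argument. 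The only ingredient you leave as a black box is the purity criterion, and you are right that this is where a fully detailed write-up needs care: for nilpotent finite-dimensional $\Lambda$-modules (equivalently, finite-length modules over the discrete valuation ring $k[p]_{(p)}$) the statement ``a cyclic submodule generated by an element of order $p^{n}$ whose height sequence is $(0,1,\ldots,n-1)$ is pure, and a pure bounded submodule is a direct summand'' is exactly the content of the results in \cite{kap} that the authors cite, so invoking it is legitimate, but one should either cite it explicitly or note that for finitely generated modules over a PID the splitting of a pure submodule is elementary. In short: correct proof, same underlying argument as the cited source, more informative than the paper's one-line reference.
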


\begin{proof}[Proof of the lemma]
 See \cite[Lemma~22 and page 29]{kap}.
\end{proof}

The following facts were proved in \cite[Proposition 2.19]{ks-cyclic}.  

\begin{prop}
  \label{prop-tableau-of-cyclic}
  Suppose $(A\subset B)$ is a~cyclic embedding with $A$
  of Loewy length $r$. Let $\Gamma$ be an LR-tableau of this embedding.
  \begin{enumerate}
  \item In the tableau, each entry $1, \ldots, r$
    occurs exactly once.
  \item The height sequence $(m_i)$
    of the submodule generator $a$ 
    determines the rows in $\Gamma$ in which
    the entries occur, and conversely.  
    More precisely, the entry $e$ occurs in row $m_{e-1}+1$.
  
  \end{enumerate}
\end{prop}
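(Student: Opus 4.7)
The plan is to prove both parts by a direct computation of the transposed partitions $\overline{\gamma^{(e)}}$ using the cyclic structure of the submodule $A$. For part~1, since $A$ is cyclic of Loewy length $r$, we have $A \cong \Lambda/(p^r) = N_{(r)}$, hence the content of the LR-tableau is $\alpha = (r)$ and $\overline{\alpha} = (1, 1, \ldots, 1)$ with $r$ entries. By the definition of an LR-tableau, for each $e \in \{1, \ldots, r\}$ the skew diagram $\gamma^{(e)} \setminus \gamma^{(e-1)}$ carries exactly $\overline{\alpha}_e = 1$ entries equal to $e$, giving part~1.

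For part~2, the unique box with entry $e$ lies in $\gamma^{(e)} \setminus \gamma^{(e-1)}$, so I need to show it is in row $m_{e-1}+1$, equivalently that $\overline{\gamma^{(e)}}_{j+1} - \overline{\gamma^{(e-1)}}_{j+1}$ equals $1$ when $j = m_{e-1}$ and $0$ otherwise. I would compute $\overline{\gamma^{(e)}}_{j+1}$ from the Loewy filtration via the identity
\[
p^j(B/p^eA) \;=\; (p^jB + p^eA)/p^eA,
\]
which gives $\dim p^j(B/p^eA) = \dim p^jB - \dim(p^jB \cap p^eA)$. Since $A$ is cyclic, $p^eA$ is spanned by the elements $p^ka$ for $e \leq k \leq r-1$ together with $0$; by the definition of the height sequence, $p^ka$ lies in $p^jB$ iff $m_k \geq j$. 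Therefore
\[
\dim(p^jB \cap p^eA) \;=\; \#\{k \in [e,r-1] : m_k \geq j\},
\]
and taking the difference between consecutive $j$'s yields
\[
\overline{\gamma^{(e)}}_{j+1} \;=\; \overline{\beta}_{j+1} - \#\{k \in [e, r-1] : m_k = j\}.
\]

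Comparing the analogous formulas for $e$ and $e-1$, the difference $\overline{\gamma^{(e)}}_{j+1} - \overline{\gamma^{(e-1)}}_{j+1}$ equals $[m_{e-1} = j \text{ and no } k \geq e \text{ has } m_k = j]$; the second condition is automatic from strict monotonicity of the height sequence, so the difference is $1$ exactly when $j = m_{e-1}$ and $0$ otherwise. This shows that the box added to pass from $\gamma^{(e-1)}$ to $\gamma^{(e)}$ sits in row $m_{e-1}+1$, which proves part~2.

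The main obstacle is the careful justification of the formula $\dim(p^jB \cap p^eA) = \#\{k \in [e,r-1] : m_k \geq j\}$. The argument rests on the fact that cyclicity of $A$ forces every nonzero element of $p^eA$ to have the form $p^ka$ for a unique $k \in [e, r-1]$, combined with the defining property $h_B(p^ka) = m_k$ of the height sequence; once this is in place, all remaining steps are elementary dimension counts and the bijection between partitions and isomorphism classes of nilpotent $\Lambda$-modules.
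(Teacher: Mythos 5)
Your argument is correct, but there is nothing in the paper to compare it with: the paper does not prove Proposition \ref{prop-tableau-of-cyclic} at all, it only records that the statement "was proved in \cite{ks-cyclic}" (Proposition 2.19 there). So what you have written is a self-contained proof where the paper has an external citation, and as such it is a welcome addition. Part 1 is immediate from $A\cong N_{(r)}$, hence $\overline{\alpha}=(1,\dots,1)$, as you say. For part 2 your chain of identities is sound under the paper's convention that row $j$ of the diagram of $\lambda$ has $\overline{\lambda}_j$ cells: $\overline{\gamma^{(e)}}_{j+1}=\dim p^j(B/p^eA)-\dim p^{j+1}(B/p^eA)$, $\dim p^j(B/p^eA)=\dim p^jB-\dim(p^jB\cap p^eA)$, and the key count $\dim(p^jB\cap p^eA)=\#\{k\in[e,r-1]:m_k\ge j\}$ combine to give $\overline{\gamma^{(e)}}_{j+1}-\overline{\gamma^{(e-1)}}_{j+1}=[\,j=m_{e-1}\,]$, which is exactly the claim. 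The one point you should tighten is the justification of that key count: it is not literally true that every nonzero element of $p^eA$ has the form $p^ka$. Rather, a nonzero element $f(p)a$ with $f=\sum_{k\ge e}c_kp^k$ equals $u(p)\cdot p^{k_0}a$, where $k_0=\min\{k:c_k\neq 0\}$ and $u$ has nonzero constant term; since $B$ is nilpotent, $u(p)$ acts invertibly on $B$ and therefore preserves heights, so $h(f(p)a)=m_{k_0}$. Together with the strict monotonicity of $(m_i)$ this shows that $p^jB\cap p^eA$ is precisely the span of those $p^ka$ with $m_k\ge j$, which is what the dimension formula needs. With that sentence inserted the proof is complete.
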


We define the second type of embeddings. Let $r>q$ be natural numbers and  $m=(m_0,\ldots,m_r)$, $n=(n_0,\ldots,n_q)$ be two~strictly increasing sequences with gaps exactly after $m_{i_1}>\ldots>m_{i_s}$ and $n_{l_1}>\ldots>n_{l_t}$ respectively.  Assume that $m_r>n_q+1$. Let $D(m,n)$ be the embedding defined as follows. 
For $1\leq j\leq s$ (respectively $1\leq j'\leq t$)  put $\beta_j=m_{i_j}+1$ (resp. $\lambda_{j'}=n_{l_{j'}}+1$) and $\ell_j=m_{i_j}-i_j$ (resp. $\ell'_{j'}=n_{l_{j'}}-l_{j'}$), then $\beta$  (resp. $\lambda$) and $\ell$ (resp. $\ell'$) are strictly decreasing
sequences of positive and nonnegative integers, respectively. 

Let $$N_\beta=\bigoplus_{i=1}^s \Lambda/(p^{\beta_i})  \;\;({\rm resp.}\;\; N_\lambda=\bigoplus_{i=1}^t \Lambda/(p^{\lambda_i})$$
be generated by elements $b_{\beta_j}$ of order $p^{\beta_j}$ and $b'_{\lambda_j}$ of order $p^{\lambda_j}$, respectively.
Let $a_1=\sum_{j=1}^s p^{\ell_j}\cdot b_{\beta_j}$ and $a_2=\sum_{j=2}^s p^{\ell_j+r-q-1}\cdot b_{\beta_j}+\sum_{j=1}^t p^{\ell'_j}\cdot b'_{\lambda_j}$ then $D(m,n)=((a_1,a_2)\subset N_\beta\oplus N_\lambda)$.

\begin{lem}\label{lem-DMN-table} 
Let $r>q$ be natural numbers and let $m=(m_0,\ldots,m_r)$ and $n=(n_0,\ldots,n_q)$ be two~strictly increasing sequences with gaps exactly after $m_{i_1}>\ldots>m_{i_s}$ and $n_{l_1}>\ldots>n_{l_t}$ respectively. If  $m_r\geq n_q+1$, then the embeddings $D(m,n)$ and $P(m)\oplus P(n)$ have the same LR-tableau. 
\end{lem}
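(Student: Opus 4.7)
The plan is to establish the stronger statement that, for every $e\ge 0$, the ambient quotients $B/p^eA$ and $B/p^eA'$ are isomorphic as $\Lambda$-modules, where $B=N_\beta\oplus N_\lambda$ is the common ambient of both embeddings, $A=\langle a_1,a_2\rangle$ is the submodule of $D(m,n)$, and $A'=\langle a_1,a_2'\rangle$ with $a_2'=\sum_{i=1}^t p^{\ell'_i}b'_{\lambda_i}$ is the submodule of $P(m)\oplus P(n)$. Since the partition $\gamma^{(e)}$ appearing in the LR-tableau is, by definition, the isomorphism type of $B/p^eA$, this will immediately yield $LR(D(m,n))=LR(P(m)\oplus P(n))$.

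The first step is a direct rewriting that pinpoints how the two submodules differ. Using $\ell_1=m_r-r$ and $\beta_1=m_r+1$ (which hold because $i_1=r$, as the last term of any finite strictly increasing sequence automatically creates a gap), one obtains
\[
a_2 \;=\; a_2' + p^{r-q-1}\,a_1 - p^{m_r-q-1}\,b_{m_r+1},
\]
so that modulo $\Lambda\,a_1$ the submodules $A$ and $A'$ differ only by the single element $p^{m_r-q-1}\,b_{m_r+1}$.

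Next, for each $e$ I would define a $\Lambda$-linear map $\phi_e\colon B/p^eA'\to B/p^eA$ by the identity on all generators except
\[
\phi_e(b'_{\lambda_1}) \;=\; b'_{\lambda_1} - p^{m_r-n_q-1}\,b_{m_r+1};
\]
the hypothesis $m_r\ge n_q+1$ is precisely what makes the exponent $m_r-n_q-1\ge 0$ valid. Well-definedness reduces to two checks. First, the order relation $p^{\lambda_1}b'_{\lambda_1}=0$ must survive under $\phi_e$, which forces $p^{m_r}b_{m_r+1}=0$ in $B/p^eA$; the gap condition $i_j<i_1=r$ for $j\ge 2$ gives $\ell_j+r\ge\beta_j$, hence the key identity $p^r a_1=p^{m_r}b_{m_r+1}$, and therefore $p^{m_r}b_{m_r+1}=p^{r-e}(p^e a_1)\in p^eA$ for all $e\le r$ (the case $e=r+1$ being trivial). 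Second, the relation $p^e a_2'=0$ of $B/p^eA'$ is sent to $p^e a_2'-p^{e+m_r-q-1}b_{m_r+1}=p^e a_2-p^{r-q-1}(p^e a_1)\in p^eA$, which vanishes in the target.

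To finish, $\phi_e$ is surjective because its image contains each $b_{\beta_j}$ and each $b'_{\lambda_i}$ (the latter for $i=1$ is recovered as $\phi_e(b'_{\lambda_1})+p^{m_r-n_q-1}\phi_e(b_{\beta_1})$), and a routine count gives $\dim_k(p^e A)=\dim_k(p^eA')=\max(0,r+1-e)+\max(0,q+1-e)$, since both $\Lambda p^ea_1\cap\Lambda p^e a_2$ and $\Lambda p^ea_1\cap\Lambda p^e a_2'$ vanish by comparison of their $N_\lambda$-components. Hence $\dim_k(B/p^eA)=\dim_k(B/p^eA')$ and $\phi_e$ is forced to be an isomorphism. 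The main technical obstacle is the order check for $\phi_e(b'_{\lambda_1})$: it is there that the identity $p^r a_1=p^{m_r}b_{m_r+1}$ and the hypothesis $m_r\ge n_q+1$ are both essential.
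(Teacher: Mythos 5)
Your proof is correct, and it takes a genuinely different route from the paper's. The paper works with numerical invariants: it quotes the formula that the number of boxes in the first $w$ rows of $\gamma^{(e)}$ equals the length of $B/(p^eA+p^wB)$, and then shows $\dim_k p^eA=\dim_k p^eA_1$ and $\dim_k(p^eA\cap p^wB)=\dim_k(p^eA_1\cap p^wB)$ for all $e,w$ by exhibiting explicit bases of these intersections (via its properties (P1)--(P4)). You instead build, for each $e\le r$, an explicit $\Lambda$-isomorphism $B/p^eA'\to B/p^eA$ (the case $e=r+1$, where both quotients are all of $B$, being trivial), which determines the partitions $\gamma^{(e)}$ directly from the definition $N_{\gamma^{(e)}}\cong B/p^eA$. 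Your key steps check out: $a_2=a_2'+p^{r-q-1}a_1-p^{m_r-q-1}b_{m_r+1}$ follows from $\ell_1=m_r-r$; the identity $p^ra_1=p^{m_r}b_{m_r+1}$ uses exactly $i_j\le r-1$ for $j\ge 2$; and $\lambda_1+(m_r-n_q-1)=m_r$ correctly reduces the order check for $\phi_e(b'_{\lambda_1})$ to $p^{m_r}b_{m_r+1}\in p^eA$ --- note that your map is well defined only into the quotient $B/p^eA$ and not as an endomorphism of $B$, which is exactly why $D(m,n)$ and $P(m)\oplus P(n)$ can fail to be isomorphic as embeddings while having the same tableau. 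Your approach buys a cleaner argument that avoids the delicate intersection bases; the paper's yields the row-by-row box counts directly. One point worth spelling out in your ``routine count'': the formula $\dim_k\Lambda p^ea_2=\max(0,q+1-e)$ requires that the $N_\beta$-component of $a_2$ have order at most $p^{q+1}$ (again a consequence of $i_j\le r-1$); this same fact is what makes the comparison of $N_\lambda$-components force $\Lambda p^ea_1\cap\Lambda p^ea_2=0$.
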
 
\begin{proof}
To prove that $D(m,n)$ and $P(m)\oplus P(n)$ have the same LR-tableau we apply the following observation, see \cite[(2.4)]{ks-MZ}.
For any embedding $(A\subset B)$ with LR-tableau $[\gamma^{(i)}]$, the number of boxes in the first $w$ rows of $\gamma^{(e)}$ is the length of the $\Lambda$-module $B/(p^eA+p^wB)$.\medskip

 Let $r>q$ be natural numbers and let $m=(m_0,\ldots,m_r)$, $n=(n_0,\ldots,n_q)$ be two~strictly increasing sequences  such that $m_r>n_q+1$ with gaps exactly after $m_{i_1}>\ldots>m_{i_s}$ and $n_{l_1}>\ldots>n_{l_t}$ respectively. Put $\beta_j=m_{i_j}+1$ (resp. $\lambda_{j'}=n_{l_{j'}}+1$) and $\ell_j=m_{i_j}-i_j$ (resp. $\ell'_{j'}=n_{l_{j'}}-l_{j'}$).
Let $a_1=\sum_{j=1}^s p^{\ell_j}\cdot b_{\beta_j}$, $a_2=\sum_{j=1}^t p^{\ell'_j}\cdot b'_{\lambda_j}$, $a_3=\sum_{j=2}^s p^{\ell_j+r-q-1}\cdot b_{\beta_j}$,  $P(m)\oplus P(n)=(A=(a_1,a_2)\subset N_\beta\oplus N_\lambda)$ and $D(m,n)=(A_1=(a_1,a_3+a_2)\subset N_\beta\oplus N_\lambda)$. Denote $B=N_\beta\oplus N_\lambda$. \medskip

We prove that the dimension of $p^eA$ is equal to the dimension of $p^eA_1$ and the dimension of $p^eA\cap p^wB$ is equal to the dimension of $p^eA_1\cap p^wB$, for all $e,w$.  \medskip

Since  $i_j>i_{j+1}$, $l_j>l_{j+1}$,for all $j$, we have
\begin{enumerate}
\item[(P1)] $m_{i_1}-\ell_{i_1}> m_{i_j}-\ell_{i_j}$, for all $j$;
\item[(P2)] $n_{l_1}-\ell'_{l_1}> n_{l_j}-\ell'_{l_j}$, for all $j$.
\end{enumerate}

Moreover we have:
\begin{enumerate}
\item[(P3)] $\lambda_1-\ell_1'\geq \beta_2-(\ell_2+r-q-1)$.
\end{enumerate}

Indeed, it is easy to check that $\lambda_1-\ell'_1=l_1+1$, $\beta_2-\ell_2-r=1+i_2-i_1\leq 0$ and $\beta_2-(\ell_2+r-q-1)\leq \ell_1+1$. Combining these we get (P3).\medskip

By (P1) and (P2) the elements $p^ea_1,\ldots,p^{m_{i_1}-\ell_1+1}a_1$, $p^ea_2,\ldots,p^{n_{l_1}-\ell_1'+1}a_2$ form a~basis of $p^eA$. Moreover, by (P1), (P2) and (P3), the elements $p^ea_1,\ldots,\allowbreak p^{m_{i_1}-\ell_1+1}a_1$, $p^e(a_3+a_2),\ldots,p^{n_{l_1}-\ell_1'+1}(a_3+a_2)$ form a~basis of $p^eA_1$. It follows that dimensions of $p^eA$ and $p^eA_1$ are equal.\medskip

Note that, for suitable chosen $c,\overline{c}$,   the elements $p^ca_1,\ldots,p^{m_{i_1}-\ell_1+1}a_1$, $p^{\overline{c}}a_2,\ldots,\allowbreak p^{n_{l_1}-\ell_1'+1}a_2$ form a~basis of $p^eA\cap p^wB$. It follows that if $p^{\overline{c}}a_2\neq 0$, then $\overline{c}+\ell_1'\geq w$. 

To construct a~basis of $p^eA_1\cap p^wB$, note that

\begin{enumerate}
\item[(P4)] $x+\ell_1+r-q-1\geq w$, for $x\geq\overline{c}$.
\end{enumerate}
Indeed, since $$\ell_1+r-q-1=m_r-r+r-q-1=m_r-q-1\geq n_q-q=\ell_1'$$
we have $x+\ell_1+r-q-1\geq x+\ell_1'\geq w$.

Let $x\geq \overline{c}$. Assume that $0\neq p^x(a_2+a_3)\not\in p^wB$. By (P3) and (P4), we have $p^x(a_3+a_2-p^{r-q-1}a_1)\in p^wB$.

 Finally $p^ca_1,\ldots,p^{m_{i_1}-\ell_1+1}a_1$, $p^{\overline{c}}a^{\overline{c}}_4,\ldots,\allowbreak p^{n_{l_1}-\ell_1'+1}a^{n_{l_1}-\ell_1'+1}_4$ is a~basis of $p^eA_1\cap p^wB$, where $a^y_4=a_3+a_2$, if $p^y(a_3+a_2)\in p^wB$; and  $a^y_4=a_3+a_2-p^{r-q-1}a_1$ otherwise.

\end{proof}

\section{Algebraic partial orders}\label{seq-alg-orders}

We show how partial orders $\boxleq$ and $\domleq$ control some algebraic properties of the category $\mathcal{S}(\Lambda)$. For this we compare these orders with the two classical partial orders 
$$\leq_{\sf ext},\quad \leq_{\sf hom}$$
that have been studied extensively for modules over finite dimensional algebras, see for example \cite{bongartz,bongartz1,riedtmann,kosJA03,zwara}.
These partial orders are also investigated for the category $\mathcal{S}(\Lambda)$, see \cite{ks-tran,ks-MZ}.\medskip

Let $\al,\beta,\gamma$ be a~triple of partitions, $r\in \mathbb{N}$ and let $\mathcal{S}_{\al,\gamma}^\beta$ (resp. $\mathcal{S}_{r,\gamma}^\beta$) be the full subcategory of $\mathcal{S}(\Lambda)$ consisting
of objects $(N_\alpha\subseteq N_\beta)$ such that
$N_\beta/N_\al\simeq N_\gamma$ (resp. $N_\beta/N_\al\simeq N_\gamma$ and $|\al|=r$). Note that $X$ is in $\mathcal{S}_{\al,\gamma}^\beta$ (resp. in $\mathcal{S}_{r,\gamma}^\beta$)
if and only if $LR(X)$ is in $\mathcal{T}_{\al,\gamma}^\beta$ (resp. in $\mathcal{T}_{r,\gamma}^\beta$).\medskip

The following definitions for $\mathcal{T}_{\al,\gamma}^\beta$
are introduced in \cite{ks-MZ}. We generalize them to $\mathcal{T}_{r,\gamma}^\beta$.

\begin{defin} Let $X,Y\in \mathcal{S}_{r,\gamma}^\beta$.
 \begin{enumerate}
 \item The relation $X \leq_{\sf ext} Y$  holds, if there exist 
embeddings  $H_i$, $U_i$, $V_i$  in $\mathcal{S}(\Lambda)$ and short exact
sequences $0\to U_i\to H_i\to V_i\to 0$ in $\mathcal{S}(\Lambda)$ 
such that $X\cong H_1$, $U_i\oplus V_i\cong H_{i+1}$ for $1\leq i\leq s$, 
and $Y\cong H_{s+1}$, for some natural
number $s$.

\item The relation $X\leq_{\sf hom} Y$  holds
if $$[X,Z]\leq [YZ]$$ 
for any embedding $Z$ in $\mathcal{S}=\mathcal{S}(\Lambda)$,
where $[X,Z]$ denotes the dimension of the linear space $\Hom_{\mathcal{S}}(X,Z)$
of all homomorphisms of embeddings. 
\end{enumerate}
\end{defin}

\smallskip
These orders induce two reflexive and anti-symmetric relations on the set $\mathcal T_{r,\gamma}^\beta$.

\begin {defin}
  Suppose $\Gamma$, $\widetilde{\Gamma}$ are two LR-tableaux in $\mathcal T_{r,\gamma}^\beta$.
      We write $\Gamma \leq_{\sf ext}\widetilde{\Gamma}$
      (resp. $\Gamma \leq_{\sf hom}\widetilde{\Gamma}$)
      if there is a~sequence 
      $$\Gamma=\Gamma^{(0)}, \Gamma^{(1)},\ldots,\Gamma^{(s)}=\widetilde{\Gamma}$$
      such that for each $1\leq i\leq s$ there are $X,Y\in \mathcal S_{r,\gamma}^\beta$ with $LR(X)=\Gamma^{(i-1)}$,
      $LR(Y)=\Gamma^{(i)}$ and $X\leq_{\sf ext} Y$
      (resp. $X\leq_{\sf hom} Y$).
\end {defin}

The following lemma  follows from the corresponding properties for modules. One can it also easily prove applying the functor $Hom_\mathcal{S}(-, Z)$ to the short exact sequences given in the definition of $\extleq$. 

\begin{lem}\label{lem-ext-to-hom} Suppose $\Gamma$, $\widetilde{\Gamma}$ are two LR-tableaux in $\mathcal T_{r,\gamma}^\beta$. If
$\Gamma\leq_{\sf ext}\widetilde{\Gamma}$ then $\Gamma\leq_{\sf hom}\widetilde{\Gamma}$.
\end{lem}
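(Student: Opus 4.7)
The plan is to reduce the statement to a single elementary step and then apply left-exactness of $\Hom_{\mathcal{S}}(-,Z)$. Both the module-level relation $\leq_{\sf ext}$ and the tableau-level relation $\leq_{\sf ext}$ are transitive closures of one-step moves, so it suffices to show that whenever there is a single short exact sequence
\[
0\longrightarrow U\longrightarrow H\longrightarrow V\longrightarrow 0
\]
in $\mathcal{S}(\Lambda)$ with $X\cong H$ and $Y\cong U\oplus V$, the inequality $[X,Z]\leq [Y,Z]$ holds for every $Z\in\mathcal{S}(\Lambda)$. Concatenating these inequalities along the chain of $H_i$'s gives $[X,Z]\leq [Y,Z]$ at the module level, and concatenating along the chain $\Gamma=\Gamma^{(0)},\ldots,\Gamma^{(s)}=\widetilde{\Gamma}$ lifts the inequality to all representatives of the respective tableaux.

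For the single step, I would apply the contravariant functor $\Hom_{\mathcal{S}}(-,Z)$ to the short exact sequence above. Since $\mathcal{S}(\Lambda)$ is equipped with the componentwise exact structure (a conflation is a pair of composable morphisms whose components form short exact sequences of $\Lambda$-modules), $\Hom_{\mathcal{S}}(-,Z)$ is left exact. This yields a left exact sequence of $k$-vector spaces
\[
0\longrightarrow \Hom_{\mathcal{S}}(V,Z)\longrightarrow \Hom_{\mathcal{S}}(H,Z)\longrightarrow \Hom_{\mathcal{S}}(U,Z),
\]
from which one reads off
\[
[X,Z]=[H,Z]\leq [U,Z]+[V,Z]=[U\oplus V,Z]=[Y,Z].
\]

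Iterating this inequality along the defining chain $X\cong H_1$, $H_{i+1}\cong U_i\oplus V_i$, $H_{s+1}\cong Y$ of the module-level relation $\extleq$ produces $[X,Z]\leq [Y,Z]$ for every $Z$, which is exactly $X\homleq Y$. Translating back via the definition on $\mathcal{T}_{r,\gamma}^\beta$ (the same chain of tableaux witnesses both $\extleq$ and $\homleq$) finishes the proof.

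The only real point to verify is the left-exactness of $\Hom_{\mathcal{S}}(-,Z)$, and this is a formal consequence of the componentwise exact structure on $\mathcal{S}(\Lambda)$; no delicate combinatorial input about LR-tableaux is needed. Thus I anticipate no serious obstacle: the argument is essentially the standard implication $\extleq\Rightarrow\homleq$ for modules, transported to the exact Krull–Remak–Schmidt category $\mathcal{S}(\Lambda)$ and then pushed down to tableaux using that $\extleq$ and $\homleq$ on $\mathcal{T}_{r,\gamma}^\beta$ are both defined as transitive closures indexed by pairs $X,Y$ of realizing embeddings.
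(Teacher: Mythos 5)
Your proposal is correct and follows exactly the route the paper indicates: the paper's own (one-line) justification is precisely to apply the functor $\Hom_{\mathcal S}(-,Z)$ to the short exact sequences appearing in the definition of $\extleq$ and use left exactness, which you carry out in detail, including the iteration along the chain of $H_i$'s and the lift to the tableau-level relation. No discrepancies to report.
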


In the following proposition we show that the hom-relation implies the dominance order.
As a consequence, the relations
$\leq_{\sf ext},\;\leq_{\sf hom}$ are anti-symmetric, hence partial orders.

\begin{prop}\label{prop-hom2dom}
Let $\Delta$ and $\Gamma$ be LR-tableaux in $\mathcal T_{r,\gamma}^\beta$. If
$\Delta\homleq\Gamma$, then $\Delta\domleq\Gamma$.
\end{prop}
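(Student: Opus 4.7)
The plan is to invoke the identity already used in the proof of Lemma~\ref{lem-DMN-table}: for any $X=(A\subset B)\in\mathcal{S}(\Lambda)$ with LR-tableau $[\gamma^{(0)},\ldots,\gamma^{(r)}]$, the number of boxes in the first $c$ rows of $\gamma^{(e)}$ equals the $\Lambda$-length $\ell(B/(p^{e}A+p^{c}B))$. Hence $\Delta\domleq\Gamma$ is equivalent to the family of inequalities
$$
\ell\bigl(B_X/(p^{e}A_X+p^{c}B_X)\bigr)\leq\ell\bigl(B_Y/(p^{e}A_Y+p^{c}B_Y)\bigr)\qquad(e,c\geq 0),
$$
for some (equivalently, any) choice of $X,Y\in\mathcal{S}_{r,\gamma}^{\beta}$ with $LR(X)=\Delta$ and $LR(Y)=\Gamma$; independence of the choice is built into the identity above. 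Since each of these lengths is an LR-invariant, it suffices to verify the inequality for a single step $X\homleq Y$ taken from the chain in the definition of $\homleq$.

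First I~would dispose of the trivial range $c\leq e$. A~standard fact about LR-tableaux, provable by induction on $i$ using the lattice-word property, asserts that every entry of row $i$ is at most $i$. Consequently each skew box in the first $c\leq e$ rows carries an entry $\leq c\leq e$, so it already lies in $\gamma^{(e)}$, and therefore
$$
\sum_{j=1}^{c}\overline{\gamma^{(e)}}_j=\sum_{j=1}^{c}\overline{\beta}_j.
$$
The same equality holds for $\Delta$; as $\beta$ is common to $X$ and $Y$, the two sides of the desired inequality coincide in this range.

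The essential range is $c>e$, which I~would handle with the test object
$$
Z_{e,c}:=\bigl(p^{c-e}N_{(c)}\subseteq N_{(c)}\bigr)\in\mathcal{S}(\Lambda).
$$
For $\phi\in\Hom_{\Lambda}(B,N_{(c)})$, the condition $\phi(A)\subseteq p^{c-e}N_{(c)}=\{v\in N_{(c)}:p^{e}v=0\}$ is equivalent to $\phi(p^{e}A)=0$; and since $\phi(p^{c}B)\subseteq p^{c}N_{(c)}=0$ automatically, such a $\phi$ factors through $B/(p^{e}A+p^{c}B)$. The latter module is annihilated by $p^{c}$, so writing its isomorphism type as $N_{\mu}$ with $\mu_i\leq c$, the standard identity $\dim\Hom_{\Lambda}(N_{\mu},N_{(c)})=\sum_i\min(\mu_i,c)$ collapses to $|\mu|=\ell(B/(p^{e}A+p^{c}B))$. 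In total
$$
\dim\Hom_{\mathcal{S}}(X,Z_{e,c})=\ell\bigl(B/(p^{e}A+p^{c}B)\bigr),
$$
and feeding $Z_{e,c}$ into the defining inequality $[X,Z_{e,c}]\leq[Y,Z_{e,c}]$ gives precisely the required bound.

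The main obstacle is pinpointing the correct test object $Z_{e,c}$ and carrying out the hom-identification above cleanly (in particular, noticing that the two annihilation conditions on $\phi$ can be merged into a single factorisation through $B/(p^{e}A+p^{c}B)$); the rest is combinatorial bookkeeping which combines the two cases into $\Delta\domleq\Gamma$.
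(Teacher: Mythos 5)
Your proof is correct and takes essentially the same route as the paper: both reduce to a single hom-step $X\homleq Y$ and test against the embeddings $\bigl(p^{c-e}N_{(c)}\subseteq N_{(c)}\bigr)$ (the paper's $P_e^c$), whose hom-spaces compute the partial row sums $\overline{\gamma^{(e)}_1}+\cdots+\overline{\gamma^{(e)}_c}$; the paper simply cites this formula from \cite[Lemma 4.2]{ks-MZ} where you re-derive it. Your separate treatment of the range $c\leq e$ is a harmless extra detail.
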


\begin{proof}
By the definition of hom--order there is a~sequence 
      $$\Delta=\Delta^{(0)}, \Delta^{(1)},\ldots,\Delta^{(s)}=\Gamma$$
      such that for each $0\leq i\leq s-1$ there are $X,Y\in \mathcal S_{r,\gamma}^\beta$ with $LR(X)=\Delta^{(i)}$,
      $LR(Y)=\Delta^{(i+1)}$ and $X\homleq Y$. Suppose that $X=(A\subset B)$ and $Y=(\widetilde{A}\subset \widetilde{B})$ are such that  $(A\subset B)\homleq(\widetilde{A}\subset \widetilde{B})$ and that they have LR-tableaux $\Delta^{(i)}, \Delta^{(i+1)}$, respectively, for some $0\leq i \leq s-1$.

From the formula given in the proof of \cite[Lemma 4.2]{ks-MZ} it follows:
$$\overline{\gamma^{(i)}_1}+\cdots+\overline{\gamma^{(i)}_\ell}=\dim\Hom_{\mathcal N}(B/T^iA,N_{(\ell)})
                                      =\dim\Hom_{\mathcal S}((A\subset B),P_i^\ell),$$
$$\overline{\delta^{(i)}_1}+\cdots+\overline{\delta^{(i)}_\ell}=\dim\Hom_{\mathcal N}(\widetilde{B}/T^i\widetilde{A},N_{(\ell)})
                                      =\dim\Hom_{\mathcal S}((\widetilde{A}\subset \widetilde{B}),P_i^\ell),$$
where $\Delta^{(i)}=[\gamma^{(1)},\ldots,\gamma^{(r)}]$ and $\Delta^{(i+1)}=[\delta^{(1)},\ldots,\delta^{(r)}].$ Since $\Delta^{(i)}\homleq\Delta^{(i+1)}$, we get 
\begin{multline*}
\overline{\gamma^{(i)}_1}+\cdots+\overline{\gamma^{(i)}_\ell} =\dim\Hom_{\mathcal S}((A\subset B),P_i^\ell)
\\
 \leq \dim\Hom_{\mathcal S}((\widetilde{A}\subset \widetilde{B}),P_i^\ell)=\overline{\delta^{(i)}_1}+\cdots+\overline{\delta^{(i)}_\ell}.
\end{multline*} It follows that $\Delta^{(i)}\domleq\Delta^{(i+1)}$ for $0\leq i \leq s$, and finally that $\Delta\domleq\Gamma$.
  \end{proof}

\section{The box-relation implies the-ext relation} \label{sec-ses-cyclic}

\begin{lem}\label{lem-box-to-ext}

Let $r>q$ be natural numbers and let $m=(m_0,\ldots,m_r)$ and $n=(n_0,\ldots,n_q)$ be two~strictly increasing sequences with gaps exactly after $m_{i_1}>\ldots>m_{i_s}$ and $n_{l_1}>\ldots>n_{l_t}$ respectively. If $i_2= r-1$, $m_r>n_q+1$, then there exists an exact sequence:

  $$
  0\to P(n')\to D(m,n) \to P(m')\to 0,
  $$
  where $n'=(n_0,n_1,\ldots,n_q,m_r)$ and $m'=(m_0,\ldots,m_{r-1})$.
\end{lem}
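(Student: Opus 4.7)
My plan is to exhibit an explicit short exact sequence by constructing a morphism $\phi\colon P(n')\hookrightarrow D(m,n)$ whose cokernel is isomorphic to $P(m')$. Using the notation of Section \ref{sec-Lambda-modules}, write $D(m,n)=(A_1\subset N_\beta\oplus N_\lambda)$ with $A_1=\Lambda\cdot a_1+\Lambda\cdot(a_2+a_3)$. The hypothesis $i_2=r-1$ forces $i_1=r$, $\beta_1=m_r+1$, $\ell_1=m_r-r$, and the crucial identity $r+\ell_j\geq\beta_j$ for every $j\geq 2$. The sequence $m'=(m_0,\ldots,m_{r-1})$ has the same gap pattern as $m$ with $i_1$ removed, so $P(m')=(a'_1\subset N_{\beta'})$ with $\beta'=(\beta_2,\ldots,\beta_s)$ and $a'_1=\sum_{j=2}^s p^{\ell_j}b_{\beta_j}$. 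The sequence $n'$ gains exactly one new gap at its top position $q+1$ (using $m_r>n_q+1$), so $P(n')=(a''\subset N_{\beta''})$ with $\beta''=(m_r+1,\lambda_1,\ldots,\lambda_t)$; denoting its generators $c_0,c_1,\ldots,c_t$ of orders $m_r+1,\lambda_1,\ldots,\lambda_t$, one has $a''=p^{m_r-q-1}c_0+\sum_{j=1}^t p^{\ell'_j}c_j$.

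Next I would define $\phi\colon N_{\beta''}\to N_\beta\oplus N_\lambda$ by $c_0\mapsto -b_{\beta_1}$ and $c_j\mapsto b'_{\lambda_j}$ for $j=1,\ldots,t$. This is injective with image the summand $\Lambda b_{\beta_1}\oplus N_\lambda$. Set $y=(a_2+a_3)-p^{r-q-1}a_1$. From the identity $a_3=p^{r-q-1}(a_1-p^{\ell_1}b_{\beta_1})$ one obtains $y=a_2-p^{m_r-q-1}b_{\beta_1}=-p^{m_r-q-1}b_{\beta_1}+\sum_{j=1}^t p^{\ell'_j}b'_{\lambda_j}=\phi(a'')$. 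The element $y$ has order exactly $p^{q+2}$: the equality $p^{q+2}y=0$ is immediate, while $p^{q+1}y=-p^{m_r}b_{\beta_1}\neq 0$ since $p^{q+1}a_2=0$. Therefore $\phi$ restricts to an isomorphism $\langle a''\rangle\to\langle y\rangle\subset A_1$ and defines a monomorphism in $\mathcal{S}(\Lambda)$.

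It remains to identify the cokernel. At the ambient level $(N_\beta\oplus N_\lambda)/(\Lambda b_{\beta_1}\oplus N_\lambda)\cong N_{\beta'}$ canonically; under this quotient $a_1\mapsto a'_1$ and $a_2+a_3\mapsto p^{r-q-1}a'_1$, so the image of $A_1$ in $N_{\beta'}$ is the cyclic submodule $\langle a'_1\rangle$. The hypothesis $i_2=r-1$ implies that $a'_1$ has order exactly $p^r$ (the maximum of $i_j+1$ for $j\geq 2$ equals $r$), and the dimension count $(r+q+2)-(q+2)=r=\dim\langle a'_1\rangle$ yields $A_1/\langle y\rangle\cong\langle a'_1\rangle$. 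The induced quotient embedding is therefore precisely $P(m')$, completing the short exact sequence. The main obstacle is locating the correct element $y$ and verifying its order of annihilation: once $y$ is in hand, both hypotheses $i_2=r-1$ and $m_r>n_q+1$ enter only through routine order and gap-position bookkeeping.
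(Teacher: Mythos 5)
Your construction is correct and is essentially the paper's own proof: your map $\phi$ (sending $c_0\mapsto -b_{\beta_1}$ and $c_j\mapsto b'_{\lambda_j}$) is exactly the paper's monomorphism $\Psi$, your element $y=(a_2+a_3)-p^{r-q-1}a_1$ is precisely what makes the left-hand square of the paper's diagram (with bottom map $\left[\begin{smallmatrix}-p^{r-q-1}\\ 1\end{smallmatrix}\right]$) commute, and your cokernel identification is the paper's projection $\Phi$ onto $N_{\beta\setminus(\beta_1)}$. The only difference is that you carry out in detail (order of $y$, dimension count for the submodule level) the verification that the paper dismisses as ``straightforward to check.''
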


    \begin{proof}
Set  $\beta_j=m_{i_j}+1$  and  $\lambda_{j'}=n_{l_{j'}}+1$. Note that $i_1=r$  and that the sequences $m'$, $n'$ have gaps exactly after $m_{i_2}>\ldots>m_{i_s}$ and $m_r>n_{l_1}>\ldots>n_{l_t}$ respectively. Under the assumptions of lemma we have
 
 $$P(n')=((p^{m_r-(q+1)}\cdot b_{\beta_{1}}+\sum_{j=1}^tp^{\ell'_j}\cdot b'_{\lambda_{j}})\subset N_{(m_r+1,n_{l_1}+1,n_{l_2}+1,\ldots,n_{l_t}+1)}),$$
    \begin{multline*} 
    D(n,m)=((\sum_{j=1}^s p^{\ell_j}\cdot b_{\beta_j},\sum_{j=2}^s p^{\ell_j+r-q-1}\cdot b_{\beta_j}+\sum_{j=1}^t p^{\ell'_j}\cdot b'_{\lambda_j})\subset\\ \subset N_{(m_{i_1}+1,m_{i_2}+1,\ldots,m_{i_s}+1)}\oplus N_{(n_{l_1}+1,n_{l_2}+1,\ldots,n_{l_t}+1)})
     \end{multline*}
and
  $$P(m')=((\sum_{j=2}^sp^{\ell_j}\cdot b_{\beta_{j}})\subset N_{(m_{i_2}+1,\ldots,m_{i_s}+1)}),$$ 
 where  $\ell_j=m_{i_j}-i_j$ and $\ell'_{j'}=n_{l_{j'}}-l_{j'}$, for $1\leq j\leq s$ and $1\leq j'\leq t.$

 Since $i_1=r$, $i_2= r-1$ and $m_r>n_q+1$, it is straightforward to check that the following sequence is exact.

  $$\xymatrix{0\ar[r]&N_{ \lambda\cup(\beta_1)}\ar[rr]^-{\Psi}&&  N_{\beta} \oplus N_{\lambda}\ar[rr]^-{\Phi} &
 & N_{ \beta\setminus(\beta_1)} \ar[r] & 0\\
 0\ar[r]
 & N_{(q+2)}\ar[u]^-{f_1}
 \ar[rr]_-{\left[\begin{smallmatrix}
 -p^{r-q-1}\\
 1
 \end{smallmatrix}
 \right]}
 && N_{(r+1,q+1)}\ar[u]^-{f_2}
 \ar[rr]_-{\left[\begin{smallmatrix}
 1 & p^{r-q-1}
 \end{smallmatrix}
 \right]}
 && N_{(r)}\ar[u]^-{f_3}\ar[r]&0,}$$
 
 where 
 $$f_1=\left[\begin{smallmatrix} 
  p^{m_r-(q+1)}& p^{n_{l_1}-l_1}& \ldots & p^{n_{l_t}-l_t}
  \end{smallmatrix}\right]^T,$$

  $$f_3=\left[\begin{smallmatrix} 
 p^{m_{i_2}-i_2}&\ldots & p^{m_{i_s}-i_s}   \end{smallmatrix}\right]^T,$$ 
 $$f_2=\left[\begin{smallmatrix}
  p^{m_{i_1}-i_1}&p^{m_{i_2}-i_2}&\ldots & p^{m_{i_s}-i_s}&0&\ldots&0\\ 
  0 &p^{m_{i_2}-i_2+r-q-1}&\ldots & p^{m_{i_s}-i_s+r-q-1}&p^{n_{l_1}-l_1}& \ldots& p^{n_{l_t}-l_t} 
 \end{smallmatrix}
  \right]^T$$ 

  and

 $$\Psi= 
     \left[ 
     \begin{array}{c | c} 
     -1 &\begin{array}{c c c} 
     0 & \ldots & 0 
 \end{array}
    \\
     \hline 
     0 &\begin{array}{c c c}
     0 & \ldots & 0 
 \end{array}  \\ 
     \vdots &\begin{array}{c c c} 
     \vdots & &\vdots 
     \end{array}  \\    
    0 &\begin{array}{c c c} 
    0 & \ldots & 0
 \end{array}  \\
  \hline 
     0 &Id_{N_{ \lambda \cup(\beta_1)}} 
      \end{array} 
     \right], \quad\quad\Phi= 
     \left[ 
     \begin{array}{c | c | c |c}        \begin{array}{c} 
       0\\ 
       \vdots\\ 
       0 
       \end{array} & 
       \begin{array}{c} 
       \\ Id_{N_{ \lambda\setminus(\beta_1)}} \\ 
       \end{array}& 
       \begin{array}{c c c} 
          0&\ldots&0\\ 
         \vdots &  &\vdots\\
          0& \ldots&0
      \end{array}
      \end{array} 
     \right] $$
 
    \end{proof}

\begin{lem}\label{lem-box-to-ext-no-gap1}
Let $r>q$ be the natural numbers and let $m=(m_0,\ldots,m_r)$ and $n=(n_0,\ldots,n_q)$ be two~strictly increasing sequences with gaps exactly after $m_{i_1}>\ldots>m_{i_s}$ and $n_{l_1}>\ldots>n_{l_t}$ respectively. If $i_2\neq r-1$, $m_r>n_q+1$, then there exists an exact sequence:
  $$
  0\to P(n')\to D(m,n)\oplus E_{m_{r}} \to P(m')\to 0,
  $$ 
  where $n'=(n_0,n_1,\ldots,n_q,m_r)$ and $m'=(m_0,\ldots,m_{r-1})$.
\end{lem}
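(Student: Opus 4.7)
The plan is to adapt the explicit short exact sequence constructed in Lemma \ref{lem-box-to-ext} to the present setting. The structural difference is that the failure of $i_2 = r-1$ forces $m_r = m_{r-1}+1$, so the cyclic summand $N_{(m_r)}$, which played a crucial role in the previous construction as $N_{(\beta_2)}$, is now absent from $N_\beta$. Adjoining $E_{m_r}=(0\subset N_{(m_r)})$ to $D(m,n)$ supplies the missing summand in the middle term. Simultaneously, the partition $m'$ acquires a new ``gap'' at position $r-1$ by the last-entry convention, so the middle of $P(m')$ becomes $N_{(m_r,\,m_{i_2}+1,\ldots,m_{i_s}+1)}$, i.e.\ one extra summand $N_{(m_r)}$ larger than in Lemma \ref{lem-box-to-ext}. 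A dimension check confirms compatibility, and the submodules still fit the same short exact sequence $0\to N_{(q+2)}\to N_{(r+1,q+1)}\to N_{(r)}\to 0$ as before.

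The plan is then to build a commutative diagram of the same shape as in Lemma \ref{lem-box-to-ext}, with top row
\[
0 \to N_{\lambda \cup (m_r+1)} \stackrel{\Psi'}{\longrightarrow} N_\beta \oplus N_\lambda \oplus N_{(m_r)} \stackrel{\Phi'}{\longrightarrow} N_{(m_r,\,m_{i_2}+1,\ldots,m_{i_s}+1)} \to 0,
\]
the same bottom row, and vertical maps $f'_1, f'_3, f'_2$ defined by the submodule generators (so that $f'_2$ sends the two generators of $N_{(r+1,q+1)}$ to $(a_1,0)$ and $(a_2,0)$). The novelty is concentrated in $\Psi'$ and $\Phi'$. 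The plan is to define $\Phi'$ so that both $b_{\beta_1}$ and $c_{m_r}$ map onto the generator of the new $N_{(m_r)}$-summand of the target, $b_{\beta_j}\mapsto b_{m_{i_j}+1}$ for $j\geq 2$, $b'_{\lambda_1}\mapsto p^{m_r-n_q-1} b_{m_r}$ (which is well-defined and nonzero exactly because $m_r>n_q+1$), and $b'_{\lambda_k}\mapsto 0$ for $k\geq 2$. The kernel of $\Phi'$ will then be realised as the image of a twisted inclusion $\Psi'$ sending $b_{m_r+1}\mapsto(-b_{\beta_1},\,0,\,c_{m_r})$ and $b'_{\lambda_1}\mapsto(0,\,b'_{\lambda_1},\,-p^{m_r-n_q-1}c_{m_r})$, and embedding the remaining $b'_{\lambda_k}$ in the obvious way.

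What remains is to verify: well-definedness on orders (the delicate case is $b_{\beta_1}$ of order $p^{m_r+1}$ mapping into $N_{(m_r)}$, which works since $p^{m_r+1}$ annihilates everything in sight); injectivity of $\Psi'$ (forced by the $-b_{\beta_1}$ component) and surjectivity of $\Phi'$ (immediate from the choice of images); exactness at the middle of the top row, which reduces to checking $\Phi'\circ\Psi'=0$ and then concluding by a dimension count; and commutativity of the two squares. The left-hand square reduces, as in Lemma \ref{lem-box-to-ext}, to the identity $a_2-p^{r-q-1}a_1=-p^{m_r-q-1}b_{\beta_1}+\sum p^{\ell'_k}b'_{\lambda_k}$, together with the cancellation of the two $c_{m_r}$-contributions of $\Psi'$ via the key arithmetic identity $m_r-q-1=\ell'_1+(m_r-n_q-1)$. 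The right-hand square reduces to $\Phi'(a_1)=w$ and $\Phi'(a_2)=p^{r-q-1}w$, where the otherwise missing $p^{\ell_1+r-q-1}b_{m_r}$-coefficient in the second identity is supplied precisely by the extra $\Phi'(b'_{\lambda_1})$ term. The main obstacle is finding this coordinated twist of $\Psi'$ and $\Phi'$ that keeps the top row exact while simultaneously closing both squares; the hypothesis $m_r>n_q+1$ is essential because it ensures that the compensating power $p^{m_r-n_q-1}$ is a genuine, nonzero proper power of $p$ in $\Lambda/(p^{m_r})$, which is what lets the new $N_{(m_r)}$-summand absorb the discrepancy caused by the disappearance of the $m_{r-1}+1$-part of $\beta$.
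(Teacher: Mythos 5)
Your construction is essentially identical to the paper's proof: the middle term $N_\beta\oplus N_\lambda\oplus N_{(m_r)}$, the maps $\Psi'$ and $\Phi'$ you describe coincide (up to sign conventions) with the matrices $\Psi$ and $\Phi$ in the paper, and the key cancellations you isolate ($\ell'_1+(m_r-n_q-1)=m_r-q-1=\ell_1+r-q-1$) are exactly the identities that make both squares commute there. The verification plan (well-definedness, $\Phi'\Psi'=0$ plus a dimension count, injectivity/surjectivity) is sound and in fact spells out more detail than the paper's ``straightforward to check.''
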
  

  \begin{proof}
Fix $\beta_j=m_{i_j}+1$ and $\lambda_j=n_{l_j}+1$.
Note that $m_{r-1}=m_{r}-1 $ (so $m_{r-1}-(r-1)=m_r-r$) and that the sequences $m'$, $n'$ have gaps exactly after $m_{r-1}>m_{i_2}>\ldots>m_{i_s}$ and $m_r>n_{l_1}>\ldots>n_{l_t}$ respectively. Under assumptions of lemma we have 
     $$P(n')=((p^{m_r-(q+1)}\cdot b_{m_{r}+1}+\sum_{j=1}^tp^{\ell'_j}\cdot b'_{n_{l_j}+1})\subset N_{(m_r+1,n_{l_1}+1,n_{l_2}+1,\ldots,n_{l_t}+1)}),$$ 
     
   \begin{multline*} 
   D(n,m)\oplus E_{m_r}=((\sum_{j=1}^s p^{\ell_j}\cdot b_{\beta_j},\sum_{j=2}^s p^{\ell_j+r-q-1}\cdot b_{\beta_j}+\sum_{j=1}^t p^{\ell'_j}\cdot b'_{\lambda_j})\subset\\ \subset N_{(m_{i_1}+1,m_{i_2}+1,\ldots,m_{i_s}+1)}\oplus  N_{(n_{l_1}+1,n_{l_2}+1,\ldots,n_{l_t}+1)})
   \end{multline*}
 and  
 $$P(m')=((p^{m_{r}-r}\cdot b_{m_r}+\sum_{j=2}^sp^{\ell_j}\cdot b_{m_{i_j}+1})\subset N_{(m_r,m_{i_2}+1,\ldots,m_{i_s}+1)}),$$
where  $\ell_j=m_{i_j}-i_j$ and $\ell'_{j'}=n_{l_{j'}}-l_{j'}$ for $1\leq j\leq s$ and $1\leq j'\leq t.$
Since $m_{r-1}-(r-1)=m_r-r$, $i_1=r$  and  $m_r>n_q+1$ it is straightforward
 to check that the following sequence is exact.
 $$\xymatrix{0\ar[r]&N_{(n')}\ar[rr]^-{\Psi}&& N_{(m)} \oplus N_{(n)}\oplus N_{(m_r)}\ar[rr]^-{\Phi} &
& N_{(m')} \ar[r] & 0\\
0\ar[r] 
& N_{(q+2)}\ar[u]^-{f_1}
\ar[rr]_-{\left[\begin{smallmatrix}
 -p^{r-q-1}\\
 1
\end{smallmatrix}
\right]}
&& N_{(r+1,q+1)}\ar[u]^-{f_2}
\ar[rr]_-{\left[\begin{smallmatrix}
 1 & p^{r-q-1}
\end{smallmatrix}
\right]} 
&& N_{(r)}\ar[u]^-{f_3}\ar[r]&0,}$$
where 
$$f_1=\left[\begin{smallmatrix}
 p^{m_r-(q+1)}& p^{n_{l_1}-l_1}& \ldots & p^{n_{l_t}-l_t}
\end{smallmatrix}\right]^T,$$ 
 $$f_3=\left[\begin{smallmatrix}
p^{m_r-r}& p^{m_{i_2}-i_2}&\ldots & p^{m_{i_s}-i_s}
 \end{smallmatrix}\right]^T,$$ 
$$f_2=\left[\begin{smallmatrix}
 p^{m_{i_1}-i_1}&p^{m_{i_2}-i_2}&\ldots & p^{m_{i_s}-i_s}&0&\ldots&0 &0\\
 0 &p^{m_{i_2}-i_2+r-q-1}&\ldots & p^{m_{i_s}-i_s+r-q-1}&p^{n_{l_1}-l_1}& \ldots& p^{n_{l_t}-l_t} &0
\end{smallmatrix}
\right]^T$$ 

and
$$\Psi=
    \left[
	\begin{array}{c | c}
    -1 &\begin{array}{c c c}
    0 & \ldots & 0    
\end{array}     
	\\
	\hline
    0 &\begin{array}{c c c}
    0 & \ldots & 0    
\end{array}  \\
    \vdots &\begin{array}{c c c}
    \vdots & &\vdots    
	\end{array}  \\
	
    0 &\begin{array}{c c c}
    0 & \ldots & 0    
\end{array}  \\
\hline 
	0 & \\
    \vdots  & Id_{N_{n}}\\
    0 & \\
    \hline
    1 & \begin{array}{c c c c}
     -p^c &0 & \ldots & 0    
\end{array}
     \end{array} 
    \right],\;\Phi=
    \left[
    \begin{array}{c | c | c |c} 
      \begin{array}{c}
      1\\\hline
      0\\
      \vdots\\
      0
      \end{array} &
	  \begin{array}{ c}
	  0\ldots 0\\ \hline \\ Id_{N_{m''}} \\ \\
	  \end{array}&
      \begin{array}{c c c}        
         p^c& 0 \ldots&0\\\hline
         0 & \ldots & 0 \\
        \vdots &  &\vdots\\ 
         0& \ldots&0
     \end{array} &
     \begin{array}{c}
      1\\\hline
      0\\
      \vdots\\
      0
      \end{array}
     \end{array} 
    \right], $$
    for $c=m_{i_1}-n_{\ell_1}-1=m_r-n_q-1$ and $m''=(m_0,\ldots ,m_{i_2})$.
    \end{proof}

\begin{lem}\label{lem-box-to-ext-no-gap2}
Let $r>q$ be the natural numbers and let $m=(m_0,\ldots,m_r)$ and $n=(n_0,\ldots,n_q)$ be two~strictly increasing sequences with gaps exactly after $m_{i_1}>\ldots>m_{i_s}$ and $n_{l_1}>\ldots>n_{l_t}$ respectively. If $i_2= r-1$, $m_r=n_q+1$, then there exists an exact sequence:
  $$
  0\to P(n')\oplus E_{(m_r)}\to D(m,n) \to P(m')\to 0,
  $$ 
  where $n'=(n_0,n_1,\ldots,n_q,m_r)$ and $m'=(m_0,\ldots,m_{r-1})$.
\end{lem}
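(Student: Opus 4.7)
I would follow the same template as in the proofs of Lemmas \ref{lem-box-to-ext} and \ref{lem-box-to-ext-no-gap1}: exhibit an explicit $3\times 3$ commutative diagram with exact rows in $\mod_0\Lambda$, whose middle column gives the desired short exact sequence in $\mathcal{S}(\Lambda)$. The top row would be the short exact sequence of ambient modules $0 \to N_{(\beta_1)}\oplus N_\lambda \to N_\beta \oplus N_\lambda \to N_{(\beta_2,\ldots,\beta_s)} \to 0$, and the bottom row would be $0 \to N_{(q+2)} \to N_{(r+1,q+1)} \to N_{(r)} \to 0$ (the submodules of $P(n')\oplus E_{(m_r)}$, $D(m,n)$, and $P(m')$ respectively), exactly as in Lemma \ref{lem-box-to-ext}.

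The characteristic feature of this case is that $m_r = n_q + 1$ forces $\lambda_1 = m_r$ and causes the sequence $n' = (n_0,\ldots,n_q,m_r)$ to lose the gap at position $q$ present in $n$. Consequently the ambient module of $P(n')$ has only $t$ summands, namely $N_{(m_r+1,\lambda_2,\ldots,\lambda_t)}$, and the ``missing'' $N_{(\lambda_1)} = N_{(m_r)}$ summand is supplied by the extra $E_{(m_r)}$ on the left. Concretely, I would take $\Psi$ on basis vectors to be $b^{new}_{m_r+1} \mapsto -b_{\beta_1} + b'_{\lambda_1}$, $b'_{\lambda_j} \mapsto b'_{\lambda_j}$ for $j \geq 2$, and $e \mapsto b'_{\lambda_1}$, where $e$ generates the $E_{(m_r)}$-summand; and $\Phi$ the natural projection onto $N_{(\beta_2,\ldots,\beta_s)}$ killing $b_{\beta_1}$ and all of $N_\lambda$. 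The extra $+b'_{\lambda_1}$ in the image of $b^{new}_{m_r+1}$ is the key modification relative to Lemma \ref{lem-box-to-ext}: combined with the identity $\ell_1 + r - q - 1 = m_r - q - 1 = n_q - q = \ell'_1$ (which uses both $i_1 = r$ and $m_r = n_q + 1$), it makes $\Psi(\tilde a_1) = (a_3 + a_2) - p^{r-q-1} a_1$, where $\tilde a_1$ is the submodule generator of $P(n')$; so the image of $\tilde a_1$ lies in the submodule of $D(m,n)$.

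The main obstacle, as in the previous two lemmas, is verifying exactness at the submodule level: that the kernel of $\Phi$ restricted to $(a_1, a_3+a_2)$ is generated over $\Lambda$ by $\Psi(\tilde a_1)$. Writing a general element as $f a_1 + g(a_3 + a_2)$, the condition $\Phi(f a_1 + g(a_3+a_2)) = 0$ reduces, using $\beta_2 - \ell_2 = r$ (from $i_2 = r-1$) and $\beta_j - \ell_j \leq r-1$ for $j \geq 3$, to the single congruence $f + g\, p^{r-q-1} \equiv 0 \pmod{p^r}$; hence this kernel is $\Lambda$-generated by $p^r a_1$ and $(a_3 + a_2) - p^{r-q-1} a_1 = \Psi(\tilde a_1)$. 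One then verifies that $p^r a_1 = p^{m_r} b_{\beta_1}$ is itself a $\Lambda$-multiple of $\Psi(\tilde a_1)$, specifically $p^r a_1 = -p^{q+1} \Psi(\tilde a_1)$, the $N_\lambda$-components of the right-hand side vanishing because $\ell'_1 + q + 1 = \lambda_1$ and $\ell'_j + q + 1 \geq \lambda_j$ for $j \geq 2$. This makes the kernel cyclic on $\Psi(\tilde a_1)$. The remaining checks---injectivity of $\Psi$, surjectivity of $\Phi$, and commutativity of the $3 \times 3$ diagram---are routine, and assembling the diagram yields the claimed short exact sequence in $\mathcal{S}(\Lambda)$.
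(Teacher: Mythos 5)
Your proposal is correct and follows essentially the same route as the paper's own proof: the same $3\times 3$ diagram with the same maps $\Psi$ (sending $b^{new}_{m_r+1}\mapsto -b_{\beta_1}+b'_{\lambda_1}$ and the $E_{(m_r)}$-generator to $b'_{\lambda_1}$) and $\Phi$ (projection onto $N_{(\beta_2,\ldots,\beta_s)}$), hinging on the same identity $m_r-(q+1)=n_q-q=\ell'_1$. Your explicit verification of exactness at the submodule level (the congruence $f+gp^{r-q-1}\equiv 0 \pmod{p^r}$ and the relation $p^ra_1=-p^{q+1}\Psi(\tilde a_1)$) correctly fills in what the paper dismisses as ``straightforward to check.''
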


  \begin{proof}
     Note that the sequences $m'$, $n'$ have gaps exactly after $m_{i_2}>\ldots>m_{i_s}$ and $m_r>n_{l_2}>\ldots>n_{l_t}$ respectively. Under assumptions of lemma we have 
     $$P(n')=((p^{m_r-(q+1)}\cdot b_{m_{r}+1}+\sum_{j=2}^tp^{\ell'_j}\cdot b'_{n_{l_j}+1})\subset N_{(m_r+1,n_{l_2}+1,\ldots,n_{l_t}+1)})\oplus E_{(m_r)},$$ 
     
   \begin{multline*} 
   D(n,m)=((\sum_{j=1}^s p^{\ell_j}\cdot b_{\beta_j},\sum_{j=2}^s p^{\ell_j+r-q-1}\cdot b_{\beta_j}+\sum_{j=1}^t p^{\ell'_j}\cdot b'_{\lambda_j})\subset\\ \subset N_{(m_{i_1}+1,m_{i_2}+1,\ldots,m_{i_s}+1)}\oplus  N_{(n_{l_1}+1,n_{l_2}+1,\ldots,n_{l_t}+1)})
   \end{multline*}
 and 
 $$P(m')=((\sum_{j=2}^sp^{\ell_j}\cdot b_{m_{i_j}+1})\subset N_{(m_{i_2}+1,\ldots,m_{i_s}+1)}),$$
where  $\ell_j=m_{i_j}-i_j$ and $\ell'_{j'}=n_{l_{j'}}-l_{j'}$ for $1\leq j\leq s$ and $1\leq j'\leq t.$ 
Since $i_1=r$ and $l_1=q$, we have
 $$ m_r-(q+1)=n_q+1-(q+1)=n_q-q=n_{l_1}-l_1.$$
Applying this equality, it is straightforward
 to check that the following sequence is exact.
 $$\xymatrix{0\ar[r]&N_{(n')}\oplus E_{(m_r)}\ar[rr]^-{\Psi}&& N_{(m)} \oplus N_{(n)}\ar[rr]^-{\Phi} &
& N_{(m')} \ar[r] & 0\\
0\ar[r] 
& N_{(q+2)}\ar[u]^-{f_1}
\ar[rr]_-{\left[\begin{smallmatrix}
 -p^{r-q-1}\\
 1
\end{smallmatrix}
\right]}
&& N_{(r+1,q+1)}\ar[u]^-{f_2}
\ar[rr]_-{\left[\begin{smallmatrix}
 1 & p^{r-q-1}
\end{smallmatrix}
\right]} 
&& N_{(r)}\ar[u]^-{f_3}\ar[r]&0,}$$
where 
$$f_1=\left[\begin{smallmatrix}
 p^{m_r-(q+1)}& p^{n_{l_2}-l_2}& \ldots & p^{n_{l_t}-l_t}&0
\end{smallmatrix}\right]^T,$$ 
 $$f_3=\left[\begin{smallmatrix}
 p^{m_{i_2}-i_2}&\ldots & p^{m_{i_s}-i_s} & 
 \end{smallmatrix}\right]^T,$$ 
$$f_2=\left[\begin{smallmatrix}
 p^{m_{i_1}-i_1}&p^{m_{i_2}-i_2}&\ldots & p^{m_{i_s}-i_s}&0&\ldots&0\\
 0 &p^{m_{i_2}-i_2+r-q-1}&\ldots & p^{m_{i_s}-i_s+r-q-1}&p^{n_{l_1}-l_1}& \ldots& p^{n_{l_t}-l_t}
\end{smallmatrix}
\right]^T$$ 

and
$$\Psi=
    \left[
	\begin{array}{c | c | c}
    -1 &\begin{array}{c c c}
    0 & \ldots & 0    
\end{array}     
	& 0 \\
	\hline
    0 &\begin{array}{c c c}
    0 & \ldots & 0    
\end{array} &0  \\
    \vdots &\begin{array}{c c c}
    \vdots & \quad &\vdots    
	\end{array}  & \vdots \\
    0 &\begin{array}{c c c}
    0 & \ldots & 0    
\end{array} &0 \\

    1 &\begin{array}{c c c}
    0 & \ldots & 0     
\end{array}  &1\\
\hline 
    0 &Id_{N_{n''}}&0
     \end{array} 
    \right], \quad\quad\Phi=
    \left[
    \begin{array}{c | c | c } 
      \begin{array}{c}
      0\\
      \vdots\\
      0
      \end{array} &
	  \begin{array}{c}
	  \\ Id_{N_{m'}} \\
	  \end{array}&
      \begin{array}{c c c}        
         0&\ldots&0\\
        \vdots &  &\vdots\\ 
         0& \ldots&0
     \end{array}
     \end{array} 
    \right] $$
   where $(n'')=(n_0,n_1,\ldots,n_{l_2})$.
    \end{proof}

\begin{rem}
The Lemmata \ref{lem-box-to-ext} and \ref{lem-box-to-ext-no-gap1} are still true if $n$ is an empty sequence. In this situation we assume that $q=-1$ and $D(m,()) \simeq P(m)$.
\end{rem}

We illustrate lemmata by the following example.

\begin{ex}
Let $\Delta,\Gamma$ be the following LR-tableaux
$$
\Delta=\ytableausetup{centertableaux}\ytableausetup{smalltableaux}\ytableaushort
{\none\none\none\none\none\none1,\none\none\none\none\none1,\none\none\none\none2,\none\none\none1,\none\none3,\none2,4}*{7,6,5,4,3,2,1}\quad \quad \Gamma=\ytableausetup{centertableaux}\ytableausetup{smalltableaux}\ytableaushort
{\none\none\none\none\none\none1,\none\none\none\none\none1,\none\none\none\none2,\none\none\none1,\none\none1,\none2,3}*{7,6,5,4,3,2,1}
$$
Note that $\Delta\boxleq \Gamma$.  Indeed $\Delta$ is obtained from $\Gamma$ by two decreasing box move: the entry in the third column of $\Gamma$ can be increased first, next it is possible to increase the entry in the first column and we get $\Delta$. We show that $\Delta\extleq\Gamma$.

Denote by $\widehat{\Gamma}$ the tableau obtained from $\Gamma$ by increasing by two the entry in the third column. We have 
$\Delta\prec_{\rm box}\widehat{\Gamma}\prec_{\rm box}\Gamma$.  

We set 
$$\Gamma^{\{1,2,4,7\}}=\ytableausetup{centertableaux}\ytableausetup{smalltableaux}\ytableaushort
{\none\none\none1,\none,\none,\none\none1,\none,\none2,3}*{4,3,3,3,2,2,1}\quad\Gamma^{\{3\}}=\ytableausetup{centertableaux}\ytableausetup{smalltableaux}\ytableaushort
{\none,\none,\none,\none,1}*{1,1,1,1,1}
\quad\Gamma^{\{5,6\}}=\ytableausetup{centertableaux}\ytableausetup{smalltableaux}\ytableaushort
{\none,\none1,2}*{2,2,1}
\quad\widehat{\Gamma}^{\{3,5,6\}}=\ytableausetup{centertableaux}\ytableausetup{smalltableaux}\ytableaushort
{\none,\none\none1,\none2,\none,3}*{3,3,2,1,1}$$
then $\Gamma=\Gamma^{\{1,2,4,7\}}\cup \Gamma^{\{3\}}\cup\Gamma^{\{5,6\}}$ and $\widehat{\Gamma}=\Gamma^{\{1,2,4,7\}}\cup \widehat{\Gamma}^{\{3,5,6\}}.$ Moreover $
\Gamma^{\{1,2,4,7\}}$ is the tableau of $X=P(0)\oplus P(3,5,6)\oplus E_{(6)}$, $\Gamma^{\{3\}}$ is the tableau of $P(4)$, $\Gamma^{\{5,6\}}$ is the tableau of $P(2,1)
\oplus E_{(2)}$ and $\widehat{\Gamma}^{\{3,5,6\}}$ is the tableau of $D((1,2,4),())\oplus E_{(2)}$. By lemma \ref{lem-box-to-ext} there exists the following short exact sequence:
$$0\to P(4)\to D((1,2,4),())\to P(2)\to 0,$$
so we have also the short exact sequence:
$$0\to P(4)\oplus X \to D((1,2,4),())\oplus X \oplus E_{(2)}\to P(2)\oplus E_{(2)}\to 0.$$
By the short exact sequence above we have $\widehat{\Gamma}\extleq \Gamma$. 
Similarly we show that $\Delta\extleq \widehat{\Gamma}$. For tableaux:
$$\widehat{\Gamma}^{\{7\}}=\ytableausetup{centertableaux}\ytableausetup{smalltableaux}\ytableaushort
{1}*{1}\quad
\widehat{\Gamma}^{\{1,2,4\}}=\ytableausetup{centertableaux}\ytableausetup{smalltableaux}\ytableaushort
{\none,\none,\none,\none\none1,\none,\none2,3}*{3,3,3,3,2,2,1}
\quad
\widehat{\Gamma}^{\{3,5,6\}}=\ytableausetup{centertableaux}\ytableausetup{smalltableaux}\ytableaushort
{\none,\none\none1,\none2,\none,3}*{3,3,2,1,1}
\quad
\Delta^{\{1,3,5,6\}}=\ytableausetup{centertableaux}\ytableausetup{smalltableaux}\ytableaushort
{\none,\none\none\none1,\none\none2,\none,\none3,\none,4}*{4,4,3,2,2,1,1}
\Delta^{\{2,4\}}=\ytableausetup{centertableaux}\ytableausetup{smalltableaux}\ytableaushort
{\none,\none,\none,\none1,\none,2}*{2,2,2,2,1,1}$$
we have $\widehat{\Gamma}=\widehat{\Gamma}^{\{7\}}\cup \widehat{\Gamma}^{\{1,2,4\}}\cup \widehat{\Gamma}^{\{3,5,6\}}$ and $\Delta=\widehat{\Gamma}^{\{7\}}\cup \Delta^{\{1,3,5,6\}}\cup \Delta^{\{2,4\}}.$ These tableaux can be obtained as a LR-tableaux for the following embeddings:
$ \widehat{\Gamma}^{\{7\}}$ -- $P(0)$,  $\widehat{\Gamma}^{\{1,2,4\}}$ -- $P(3,5,6)\oplus E_{(6)}$, $\widehat{\Gamma}^{\{3,5,6\}}$ -- $P(1,2,4)\oplus E_{(2)}$  and $\Delta^{\{1,3,5,6\}}\cup \Delta^{\{2,4\}}$ $D((1,2,4,6),(3,5)) \oplus E_{(2)}$. By lemma \ref{lem-box-to-ext-no-gap2} we have the following short exact sequence:
$$0\to P(3,5,6)\oplus E_{(6)}\to D((1,2,4,6),(3,5)) \to P(1,2,4)\to 0.$$
The sequence:
$$0\to P(3,5,6)\oplus E_{(6)}\oplus Y\to D((1,2,4,6),(3,5)) \oplus Y\to P(1,2,4)\to 0$$
is also exact, for $Y=E_{(2)}\oplus P(0)$, so $\Delta\extleq \widehat{\Gamma}$ and finally $\Delta\extleq \Gamma.$
\end{ex}

\begin{prop}\label{prop-single-box-to-ext}
Suppose $\Delta,\Gamma$ are LR-tableaux of the same shape, both are rook strips and let $\Delta$ be obtained from $\Gamma$ by increasing an~entry, then $\Delta\extleq \Gamma$.

\end{prop}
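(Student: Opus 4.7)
The proposition reduces to Lemmata \ref{lem-box-to-ext}, \ref{lem-box-to-ext-no-gap1}, \ref{lem-box-to-ext-no-gap2} once $\Gamma$ and $\Delta$ are realized as LR-tableaux of direct sums of poles (and empty embeddings) sharing a common summand. Let the box move change the entry at row $e$ of the rook strip from $a$ in $\Gamma$ to $b>a$ in $\Delta$, and set $r=b-1$, $q=a-2$, $m_r=e-1$; then $r>q$ as required by the lemmata, and the partition constraints forced on the content of $\Delta$ (combined with $\overline{\alpha}$ being a partition) imply $\overline{\alpha}_v\geq 2$ for all $v\leq a$ and $\overline{\alpha}_v\geq 1$ for $a<v\leq b-1$ in $\Gamma$, which is exactly what the decomposition below requires.

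Next I would use the LR condition for $\Gamma$ to pick rows $r_1<\dots<r_{a-1}<r_a=e$ of $\Gamma$ carrying entries $1,\dots,a$, and pairwise disjoint rows $s_1<\dots<s_{b-1}$ carrying entries $1,\dots,b-1$. Setting $n'=(r_1-1,\dots,r_a-1)$ and $m'=(s_1-1,\dots,s_{b-1}-1)$ yields an embedding $X_\Gamma=P(n')\oplus P(m')\oplus Z$ with $LR(X_\Gamma)=\Gamma$, where $Z$ is a direct sum of further poles and empty embeddings absorbing the remaining entries and empty rows of $\beta\setminus\gamma$. Defining $m=(s_1-1,\dots,s_{b-1}-1,e-1)$ and $n=(r_1-1,\dots,r_{a-1}-1)$, Lemma \ref{lem-DMN-table} gives $LR(D(m,n)\oplus Z)=\Delta$, since only the entry at row $e$ changes, now appearing as the last entry $b$ of $P(m)$ instead of the last entry $a$ of $P(n')$. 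The selection of which lemma to apply is governed by the occupancy of row $e-1$: if row $e-1$ is empty in $\Gamma$ or its entry is placed into $Z$, then $i_2=r-1$ and $m_r>n_q+1$ both hold and Lemma \ref{lem-box-to-ext} applies; if row $e-1$ carries value $b-1$ and is placed into $P(m')$, then $i_2\neq r-1$ and Lemma \ref{lem-box-to-ext-no-gap1} applies; if row $e-1$ carries value $a-1$ and is placed into $P(n')$, then $m_r=n_q+1$ and Lemma \ref{lem-box-to-ext-no-gap2} applies. The fourth combination, $i_2\neq r-1$ together with $m_r=n_q+1$, cannot occur because it would demand two distinct entries at row $e-1$ of the rook strip.

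The main technical point will be verifying the existence of the two chains $(r_i)$ and $(s_j)$ with the flexibility needed to route the entry at row $e-1$ into the prescribed pole, together with the observation in the no-gap cases that the extra summand $E_{(m_r)}$ supplied by the lemma exactly compensates the discrepancy in outer shape between $P(m)\oplus P(n)$ and $P(n')\oplus P(m')$. Once these points are settled, adjoining the identity on $Z$ to the short exact sequence from the chosen lemma produces a short exact sequence $0\to U\to X_\Delta\to V\to 0$ with $U\oplus V\cong X_\Gamma$, which yields $\Delta\extleq\Gamma$ as required.
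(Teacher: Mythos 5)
Your proposal is correct and follows essentially the same route as the paper: both decompose $\Gamma$ into the pole through the changed box, a disjoint pole carrying entries $1,\dots,b-1$, and a common remainder, realize the corresponding part of $\Delta$ as $D(m,n)$ via Lemma \ref{lem-DMN-table}, and split into the same three cases (plus the impossible fourth) according to which chain occupies the adjacent row, invoking Lemmata \ref{lem-box-to-ext}, \ref{lem-box-to-ext-no-gap1}, \ref{lem-box-to-ext-no-gap2} respectively. The ``main technical point'' you defer --- peeling off the two disjoint chains so that the remainder is still an LR-tableau --- is exactly the step the paper also asserts without detail, so nothing essential is missing relative to the published argument.
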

\begin{proof}
Let $\Delta,\Gamma$ be LR-tableaux of the same shape and let both be rook strips. Suppose that $\Delta,\Gamma$ differ by an~entry in the column number $i$. Then the tableau $\Delta'$ obtained from $\Delta$ by deleting the $i$-th column is an LR-tableau. Observe that $\Delta'$ can be obtain also from $\Gamma$ by deleting the column $i$.  Let $m$ (resp. $m'$) 
be the entry of $\Delta$ (resp. $\Gamma$) in $i$-th column. Since $\Delta'$ is the LR-tableau obtained from $\Delta$ by deleting the $i$-th column with entry $m$, we can choose indices  $i=i_m<i_{m-1}<\ldots < i_1$, such that in the column of number $i_t$ of $\Delta'$ we have the entry $t$ and the tableau $\Delta''$  obtained from $\Delta$ by deleting columns $i_{m},i_{m-1}, \ldots, i_{1}$ is an LR-tableau.  Moreover $\Gamma'=\Delta''\cup \Gamma^{\{i\}}$ is also an LR-tableaux, where 
$\Gamma^{\{i_1,\ldots,i_k\}}$ denotes diagram created from columns $i_1,\ldots,i_k$ of $\Gamma$. Similarly as above we can choose indices  $i=j_{m'}<j_{m'-1}<\ldots < j_1$, such that in the column $j_t$ of $\Gamma'$ we have the entry $t$ and the tableau $\Gamma''$ obtained from $\Gamma'$ by deleting columns $j_{m'},j_{m'-1}, \ldots, j_{1}$ is an LR-tableau (for simplifying notation, we use the indexes from the tableau $\Gamma$ instead $\Gamma'$). The LR-tableaux $\Delta,\Gamma$ can be decompose into:
$$\Delta=\Gamma''\cup \Delta^{\{i_1,\ldots,i_m,j_1,\ldots,j_{m'-1}\}} \quad {\rm and}\quad \Gamma=\Gamma''\cup \Gamma^{\{i_1,\ldots,i_{m-1}\}}\cup \Gamma^{\{j_1,\ldots,j_{m'}\}}.$$
Note that by Lemma \ref{lemma-gap} and Proposition \ref{prop-tableau-of-cyclic}, the tableau $\Gamma^{\{i_1,\ldots,i_{m-1}\}}$ (resp. $\Gamma^{\{j_1,\ldots,j_{m'}\}}$) is the LR-tableau of the embedding $P(i_1'-1,\ldots,i_{m-1}'-1)\oplus E_{\beta}$ (resp.  $P(j_1'-1,\ldots,j_{m'}'-1)\oplus E_{\beta'}$),
where by $i'_t$ (resp. $j'_t$) we denote the length of $i_t$-th (resp. $j_t$-th) column of $\Gamma$ and $\beta$ (resp. $\beta'$) is the partition with entries $i'_t$ (resp. $j'_t$) satisfying $i'_t=i'_{t+1}+1$ (resp. $j'_t=j'_{t+1}+1$). Similarly, applying also Lemma \ref{lem-DMN-table}, we get that $\Delta^{\{i_1,\ldots,i_m,j_1,\ldots,j_{m-1}\}}$ is the LR-tableau of $D((i_1'-1,\ldots,i_{m}'-1),(j_1'-1,\ldots,j_{m'-1}'-1))\oplus E_{\beta}\oplus E_{\beta'}$. \medskip

If $i_{m}'>i_{m-1}'+1$ and  $i_m'>j_{m'}'+1$, then by Lemma \ref{lem-box-to-ext} the following sequence:
\begin{multline*}
0\to P(j_1'-1,\ldots,j_{m'}'-1)\to D((i_1'-1,\ldots,i_{m}'-1),(j_1'-1,\ldots,j_{m'-1}'-1))\to \\ \to P(i_1'-1,\ldots,i_{m-1}'-1)\to 0
\end{multline*}

is exact and we are done.

In the remaining cases short exact sequences are obtained by Lemma \ref{lem-box-to-ext-no-gap1} if  $i_{m}'=i_{m-1}'+1$ and $i_m'>j_{m'}'+1$, and by Lemma \ref{lem-box-to-ext-no-gap2} if $i_{m}'>i_{m-1}'+1$ and  $i_m'>j_{m'}'+1$. 

If  $i_{m}'=i_{m-1}'+1$ and  $i_m'=j_{m'}'+1$, then $i_{m-1}'=j_{m'}'$, so $\Delta,\Gamma$ are not rook strips, and the lemma follows.
\end{proof}

\begin{lem}\label{box-implies-ext}
Suppose $\Delta,\Gamma$ are LR-tableaux of the same shape $\beta\setminus\gamma$ that is a~rook strip. If $\Delta\boxleq\Gamma$, then $\Delta\extleq \Gamma$
\end{lem}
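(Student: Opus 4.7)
The plan is to reduce $\Delta\boxleq\Gamma$ to a single decreasing box move and then handle the two types of moves separately. By definition, $\Delta\boxleq\Gamma$ means there is a chain
$$\Gamma=\Gamma^{(0)},\Gamma^{(1)},\ldots,\Gamma^{(s)}=\Delta$$
of LR-tableaux with each $\Gamma^{(j+1)}$ obtained from $\Gamma^{(j)}$ by a single decreasing box move. Since the outer and inner shapes are preserved by every such move, each $\Gamma^{(j)}$ has the same rook-strip shape $\beta\setminus\gamma$, so $\Gamma^{(j)}\in\mathcal{T}_{r,\gamma}^\beta$. Because $\extleq$ is transitive (being a partial order), it suffices to treat the case where $\Delta$ is obtained from $\Gamma$ by a single decreasing box move.

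A single decreasing box move falls into one of two types: (a) the increase of one entry, which changes the content $\alpha$; or (b) the swap of two entries of distinct values so that the smaller one ends up in a higher-numbered row, which preserves $\alpha$. Case (a) is exactly the content of Proposition~\ref{prop-single-box-to-ext}, so in that case $\Delta\extleq\Gamma$ is immediate.

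For case (b) I would mimic the strategy of Proposition~\ref{prop-single-box-to-ext}. Since $\beta\setminus\gamma$ is a rook strip, Propositions~\ref{prop-poles} and~\ref{prop-tableau-of-cyclic} allow me to realise any embedding $X\in\mathcal{S}_{r,\gamma}^\beta$ with $LR(X)=\Gamma$ as a direct sum of poles $P(m^{(1)}),\ldots,P(m^{(k)})$ together with an empty part $E_\delta$, one pole per maximal block of columns of $\Gamma$ whose entries chain consecutively down the rows; the analogous decomposition holds for $\Delta$. I then isolate the two columns involved in the swap, combine the two poles containing those columns into a single two-generator object $D(m,n)$ of Section~\ref{sec-three-embeddings} (by Lemma~\ref{lem-DMN-table} this leaves the LR-tableau unchanged), and match the resulting configuration of height sequences to the hypothesis of exactly one of Lemmata~\ref{lem-box-to-ext}, \ref{lem-box-to-ext-no-gap1}, \ref{lem-box-to-ext-no-gap2}. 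The short exact sequence produced there, enlarged by the common complementary direct summands on both sides, then witnesses $\Delta\extleq\Gamma$.

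The main obstacle is the case analysis in (b): every admissible swap must be translated into the height sequences of the poles it touches, and one must verify which of the three gap configurations (whether $i_2=r-1$ or not, and whether $m_r>n_q+1$ or $m_r=n_q+1$) is met, then select the matching common summands to append. The rook-strip hypothesis is essential here, since it forces each column and each row of $\beta\setminus\gamma$ to contain at most one entry, which sharply restricts the interaction of the height sequences of the two affected poles and should guarantee that exactly one of the three box-to-ext lemmata applies in every situation.
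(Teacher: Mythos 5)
Your reduction to a single decreasing box move and your treatment of the entry-increase case are fine and agree with the paper: transitivity of $\extleq$ plus Proposition~\ref{prop-single-box-to-ext} disposes of type (a). The gap is in your type (b), the swap of two entries. You propose to realise a swap by matching the configuration to one of Lemmata~\ref{lem-box-to-ext}, \ref{lem-box-to-ext-no-gap1}, \ref{lem-box-to-ext-no-gap2}, but none of these can produce a swap. Each of those lemmata yields a short exact sequence $0\to P(n')\to D(m,n)\to P(m')\to 0$ (up to empty summands) in which the passage from $LR(P(m')\oplus P(n'))$ to $LR(D(m,n))=LR(P(m)\oplus P(n))$ changes exactly \emph{one} box of the tableau: the box in row $m_r+1$ has its entry raised from $q+2$ to $r+1$. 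In other words, all three lemmata realise only the content-changing move ``increase an entry''; that is precisely why they are invoked inside Proposition~\ref{prop-single-box-to-ext} and nowhere else. A swap alters two boxes and preserves the content $\alpha$, so no single application of these lemmata (nor any composition of them in the allowed direction of $\extleq$, since one cannot undo an increase by a ``decrease'') can witness it.

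The paper closes this case differently: for a swap of two entries it cites the short exact sequences already constructed in \cite[Section~4]{ks-JA}, which are of a different form from the $D(m,n)$ sequences of Section~\ref{sec-ses-cyclic} and are designed exactly for the content-preserving move. So your proof as written would stall at case (b); you would either need to import that earlier result, or construct a new two-generator degeneration that exchanges the rows of two entries while keeping $\alpha$ fixed -- the three lemmata of this paper do not provide it.
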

\begin{proof}
Let $\Delta,\Gamma$ be LR-tableaux of the same shape $\beta\setminus\gamma$ that is a~rook strip. 
If $\Delta\boxleq\Gamma$, then by definition there exist a~sequence of LR-tableaux such that $$\Delta=\Delta^{(0)}\prec_{\rm box} \Delta^{(1)}\prec_{\rm box}\ldots\prec_{\rm box}\Delta^{(s)}=\Gamma.$$
Fix $i=0,1\ldots s-1$. 

If $\Delta^{(i)}$ is obtained from $\Delta^{(i+1)}$ by swapping two entries, such that the
    smaller entry winds up in the lower position, then we obtain $\Delta^{(i)}\extleq\Delta^{(i+1)}$, by \cite[Section 4.]{ks-JA}.
   If $\Delta^{(i)}$ is obtained from $\Delta^{(i+1)}$  by increasing an~entry we have $\Delta^{(i)}\extleq\Delta^{(i+1)}$ by Proposition \ref{prop-single-box-to-ext}.
   So $$\Delta=\Delta^{(0)}\extleq\Delta^{(1)}\extleq\ldots\extleq\Delta^{(s)}=\Gamma$$
   and by definition of $\extleq$ we have $\Delta\extleq\Gamma.$ 
\end{proof}
  
{\it Proof of Theorem \ref{thm-main}} Fix an integer $r\geq 1$. By Lemmata \ref{box-implies-ext} and \ref{lem-ext-to-hom}, Proposition \ref{prop-hom2dom} we have $\boxleq\subseteq\extleq\subseteq\homleq\subseteq\domleq$ on 
$\mathcal{T}_{r,\gamma}^\beta$ for arbitrary partitions $\beta,\gamma$ such that $\beta\setminus\gamma$ is a~rook strip and $|\beta|-|\gamma|=r$. By Lemma \ref{lem-LR-SYT-box-dom} we have $\domleq\subseteq \boxleq$ on $\mathcal{T}_r$. Lemma \ref{lem-dom-box} finishes the proof.

As a~consequence we get also.

\begin{thm}\label{thm-main1}
For any integer $r\geq 1$, for arbitrary partitions $\beta,\gamma$ such that $\beta\setminus\gamma$ is a~rook strip and $|\beta|-|\gamma|=r$, we have $$\boxleq=\extleq=\homleq\domleq$$ on $\mathcal{T}_{r,\gamma}^\beta$. 
\end{thm}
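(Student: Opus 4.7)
The plan is to establish the four-way equality on $\mathcal{T}_{r,\gamma}^\beta$ by assembling a cyclic chain of inclusions
$$\boxleq \;\subseteq\; \extleq \;\subseteq\; \homleq \;\subseteq\; \domleq \;\subseteq\; \boxleq.$$
The three forward inclusions come directly from results already established in the paper; the return inclusion $\domleq \subseteq \boxleq$ will be obtained by transporting Theorem \ref{thm-main} through the bijection $\Phi^\beta_\gamma$.

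For the forward direction I would simply stack the cited facts: Lemma \ref{box-implies-ext} gives $\boxleq \subseteq \extleq$ (this is the place where the explicit short-exact-sequence constructions of Lemmata \ref{lem-box-to-ext}, \ref{lem-box-to-ext-no-gap1}, and \ref{lem-box-to-ext-no-gap2} are consumed, and the rook-strip hypothesis on $\beta\setminus\gamma$ is used), Lemma \ref{lem-ext-to-hom} gives $\extleq \subseteq \homleq$, and Proposition \ref{prop-hom2dom} gives $\homleq \subseteq \domleq$. Since the assumption $|\beta|-|\gamma|=r$ together with the rook-strip condition matches the hypotheses of each of these lemmata, no extra work is required here.

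For the closing inclusion $\domleq \subseteq \boxleq$, the key tool is the bijection $\Phi^\beta_\gamma : \mathcal{T}_{r,\gamma}^\beta \to \mathcal{T}_r$ introduced in (\ref{eq-phi-gamma-beta}). Given $\Delta \domleq \Gamma$ in $\mathcal{T}_{r,\gamma}^\beta$, Lemma \ref{lem-LR-SYT-box-dom}(1) lifts this to $\Phi^\beta_\gamma(\Delta) \domleq \Phi^\beta_\gamma(\Gamma)$ in $\mathcal{T}_r$. Theorem \ref{thm-main}, which is the heart of the paper, then converts this to $\Phi^\beta_\gamma(\Delta) \boxleq \Phi^\beta_\gamma(\Gamma)$, and a final appeal to Lemma \ref{lem-LR-SYT-box-dom}(2) transports it back to $\Delta \boxleq \Gamma$.

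There is no genuine obstacle at this stage: all the difficult analytic content (the short exact sequences of Section \ref{sec-ses-cyclic}, the $f$-extension trick of Lemma \ref{lem-dom-box}, and the stability of both orders under $\Phi^\beta_\gamma$) has already been done. The proof is pure bookkeeping, assembling the previously established one-sided inclusions and the equivalence under $\Phi^\beta_\gamma$ into a closed loop that forces equality of all four relations on $\mathcal{T}_{r,\gamma}^\beta$.
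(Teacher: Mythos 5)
Your proof is correct and follows essentially the same route as the paper: the forward chain $\boxleq\subseteq\extleq\subseteq\homleq\subseteq\domleq$ from Lemma \ref{box-implies-ext}, Lemma \ref{lem-ext-to-hom} and Proposition \ref{prop-hom2dom}, closed by transporting $\domleq\subseteq\boxleq$ back from $\mathcal{T}_r$ through the bijection $\Phi^\beta_\gamma$ via Lemma \ref{lem-LR-SYT-box-dom}. This is exactly how the paper derives the statement as a consequence of Theorem \ref{thm-main}.
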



\newcommand{\noop}[1]{}
\bibliographystyle{abbrv}
\bibliography{bibliozhabiin}

\bigskip
Address of the authors:

\parbox[t]{5.5cm}{\footnotesize\begin{center}
              Faculty of Mathematics\\
              and Computer Science\\
              Nicolaus Copernicus University\\
              ul.\ Chopina 12/18\\
              87-100 Toru\'n, Poland\end{center}}
\parbox[t]{5.5cm}{\footnotesize\begin{center}
              Faculty of Mathematics\\
              and Computer Science\\
              Nicolaus Copernicus University\\
              ul.\ Chopina 12/18\\
              87-100 Toru\'n, Poland\end{center}}

\smallskip \parbox[t]{5.5cm}{\centerline{\footnotesize\tt kanies@mat.umk.pl}}
           \parbox[t]{5.5cm}{\centerline{\footnotesize\tt justus@mat.umk.pl}}

\end{document}